\documentclass[a4paper,11pt]{amsart}
\usepackage[matrix,arrow,tips,curve]{xy}
\usepackage[english]{babel}
\usepackage{amsmath}
\usepackage{amssymb}
\usepackage{tikz}
\usepackage{enumerate}
\usepackage{graphicx}
\usepackage[colorlinks=true,linkcolor=blue,citecolor=green]{hyperref}
\usetikzlibrary{trees}
\usetikzlibrary{arrows}
\usepackage[pangram]{blindtext}

\oddsidemargin = 0.0mm
\evensidemargin = 0.0mm
\topmargin = 10mm
\textheight = 235mm
\textwidth = 170mm
\voffset = -10mm
\hoffset = -5.5mm

\newtheorem{thm}{Theorem}[section]

\newtheorem*{thm*}{Theorem}

\newtheorem{teo}[thm]{Theorem} 
\newtheorem{lema}[thm]{Lemma}
\newtheorem{cor}[thm]{Corollary}
\newtheorem{prop}[thm]{Proposition}
\newtheorem{rem}[thm]{Remark}

\definecolor{wwwwww}{rgb}{0.4,0.4,0.4}

\newcommand{\km}{k_{\textrm{max}}}

\hypersetup{pdfpagemode=UseNone}
\hypersetup{pdfstartview=FitH}
		
\setcounter{tocdepth}{1}		
\numberwithin{equation}{section}	
\begin{document}
\thanks{The first author is partially suported by Conselho Nacional de Desenvolvimento Científico e Tecnológico - CNPq, code 434253/2018-9. The second author is partially supported by Coordena\c{c}\~ao de Aperfei\c{c}oamento de Pessoal de N\'ivel Superior - Brasil (CAPES) Finance Code 001. The third author was partially supported by FAPERJ/Brazil [E-26/200.386/2023]) and
CNPq/Brazil [403344/2021-2 \& 308067/2023-1].}
\title[A lower bound for the first eigenvalue of a minimal hypersurface in the sphere
]{A lower bound for the first eigenvalue of a minimal hypersurface in the sphere
}


\author[Asun Jiménez, Carlos Tapia Chinchay, Detang Zhou.]{Asun Jiménez, Carlos Tapia Chinchay, Detang Zhou.}

\address{\sc Asun Jiménez \\
Instituto de Matem\'atica e Estat\'istica, Universidade Federal Fluminense, Campus Gragoat\'a,   S\~ao Domingos\\
24210-200 Niter\'oi, Rio de Janeiro\\ Brazil.}
\email{majimenezgrande@id.uff.br}
\address{\sc Carlos Tapia Chinchay\\
Instituto de Matem\'atica e Estat\'istica, Universidade Federal Fluminense, Campus Gragoat\'a, S\~ao Domingos\\
24210-200 Niter\'oi, Rio de Janeiro\\ Brazil.}
\email{carlostapia@id.uff.br}
\address{\sc  Detang Zhou\\
Instituto de Matem\'atica e Estat\'istica, Universidade Federal Fluminense, Campus Gragoat\'a,  S\~ao Domingos\\
24210-200 Niter\'oi, Rio de Janeiro\\ Brazil.}
\email{zhoud@id.uff.br}

\subjclass[2020]{MSC 53C42 \and MSC 53C44}
\keywords{minimal hypersurfaces in $\mathbb{S}^n$,  first eigenvalue of the laplacian,   Yau's conjecture}

\begin{abstract}
Let $\Sigma$ be a closed embedded minimal hypersurface in the unit sphere $\mathbb{S}^{m+1}$  and let $\Lambda=\max\limits_{\Sigma}|A|$ be the norm of  its second fundamental form. In this work we prove that the first eigenvalue of the Laplacian of $\Sigma$ satisfies
 $$\lambda_1(\Sigma)> \dfrac{m}{2}+\frac{m(m+1)}{32(12\Lambda+m+11)^2+8},$$ and  $\lambda_1(\Sigma)=m$, when $\Lambda\le\sqrt{m}$.
 In particular, this estimate improves the  one  obtained recently in \cite{duncan2023improved}. The proof of our main result is based on a Rayleigh quotient estimate for a harmonic extension of an eigenfunction of the Laplacian of $\Sigma$ in the spirit of \cite{choi1983first}.
\end{abstract}

\maketitle


\section{Introduction.}
Let $\mathbb{S}^{m+1}$ denote the unit sphere in $\mathbb{R}^{m+2}$.
and  $\Sigma$ a closed embedded hypersurface within $\mathbb{S}^{m+1}$. The eigenvalues of the Laplacian operator $\triangle$ on $\Sigma$ form a discrete set of non-negative real numbers. We denote by $\lambda_1(\Sigma)$ the first nonzero eigenvalue. It is well known that  $\Sigma$ is minimal if and only if all coordinate functions in $\mathbb{R}^{m+2}$ restricted on $\Sigma$ are eigenfunctions corresponding to eigenvalues $m$. This implies that $\lambda_1(\Sigma)\le m$.

On the other hand, it is  interesting and important to find the sharp lower bound of $\lambda_1$ for minimal hypersurfaces in  $\mathbb{S}^{m+1}$.   Yau \cite{Y82}    conjectured that $\lambda_1(\Sigma)=m$. 
 Choi and Wang  \cite{choi1983first} proved that $\lambda_1(\Sigma)\geq \frac{m}{2}$.  This estimate was later refined in \cite{barros2004estimates}  by Barros-Bessa who   gave the lower bound
\begin{align}\label{1}
\lambda_1(\Sigma) > \frac{m}{2}.
\end{align} 
Many progress has been made towards proving Yau's conjecture after Choi-Wang's paper (see for instance \cite{barros2004estimates}, \cite{duncan2023improved}, \cite{tang2014isoparametric}, \cite{soret}, and \cite{zhao2023first}). 
Despite an extensive literature relating to the study of $\lambda_1(\Sigma)$ under additional assumptions on $\Sigma$, \eqref{1} has remained the strongest \textit{explicit} lower bound that is known to hold for a general embedded minimal hypersurface in $\mathbb{S}^{n+1}$.
The new estimate that we obtain in this work depends on the geometry of $\Sigma$ as we   explain in detail next.

Given $x\in \Sigma$, let $|A|(x):=\left(\sum\limits_{i=1}^mk_i^2(x)\right)^{1/2},$ where $k_{1}(x),\ldots,k_{m}(x)$ are the principal curvatures of $\Sigma$ in $x$ with respect to $\nu(x)$. We call $|A|$ \textit{the norm of the second fundamental form} $A_{\nu}$ and we define $\Lambda:=\max\limits_{\Sigma}|A|$.  It is known (see \cite{simons1968minimal}) that in the case $\Lambda\le\sqrt{m}$, then $\lambda_1(\Sigma)=m$. Therefore  our contribution concerns only the case $\Lambda> \sqrt{m}$.
 

 

 Precisely, we prove
\begin{teo}\label{TP}
 Let $\Sigma$ be a closed embedded minimal hypersurface in the unit sphere $\mathbb{S}^{m+1}$ and let $\Lambda=\max\limits_{\Sigma}|A|$ be the norm of  its second fundamental form. Then, the first eigenvalue of the Laplacian of $\Sigma$ satisfies 
  \begin{equation}\label{EstPrinc2}
  \lambda_1(\Sigma)> \dfrac{m}{2}+\frac{m(m+1)}{32(12\Lambda+m+11)^2+8},
 \end{equation}  and  $\lambda_1(\Sigma)=m$, when $\Lambda\le\sqrt{m}$.
\end{teo}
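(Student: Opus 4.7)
The plan is to follow the framework of Choi--Wang. Let $u\in C^{\infty}(\Sigma)$ satisfy $\Delta_{\Sigma}u+\lambda_1 u=0$ with $\int_{\Sigma}u\,dS=0$. Since $\Sigma$ is closed and embedded, the Jordan--Brouwer theorem gives two open domains $\Omega^{+},\Omega^{-}\subset\mathbb{S}^{m+1}$ with common boundary $\Sigma$. I would solve the Dirichlet problem on each side to produce harmonic extensions $v^{\pm}\in C^{\infty}(\overline{\Omega^{\pm}})$ with $v^{\pm}|_{\Sigma}=u$, and let $v$ denote the continuous function on $\mathbb{S}^{m+1}$ equal to $v^{\pm}$ on $\Omega^{\pm}$ (normalized so that $\int_{\mathbb{S}^{m+1}}v=0$, which does not affect $\nabla v$).

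Next I would apply Reilly's formula on each $\Omega^{\pm}$. Using that $v^{\pm}$ is harmonic, that $\Sigma$ is minimal ($H=0$), that $\mathrm{Ric}_{\mathbb{S}^{m+1}}=m\,g$, and the boundary identity $\int_{\Omega^{\pm}}|\nabla v^{\pm}|^{2}=\int_{\Sigma}u\,\partial_{\nu^{\pm}}v^{\pm}$, one obtains
\[
\int_{\Omega^{\pm}}|\mathrm{Hess}\,v^{\pm}|^{2}\,dV=(2\lambda_1-m)\int_{\Omega^{\pm}}|\nabla v^{\pm}|^{2}\,dV\mp\int_{\Sigma}A(\nabla u,\nabla u)\,dS,
\]
where the sign flip on the boundary term reflects $\nu^{-}=-\nu^{+}$ (here $A$ is the second fundamental form with respect to $\nu^{+}$). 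Summing the two identities eliminates the $A$-term and yields
\[
\int_{\mathbb{S}^{m+1}}|\mathrm{Hess}\,v|^{2}=(2\lambda_1-m)\int_{\mathbb{S}^{m+1}}|\nabla v|^{2},
\]
recovering the Choi--Wang bound $\lambda_1\ge m/2$. The Barros--Bessa strict inequality then follows from the rigidity observation that $\mathrm{Hess}\,v^{\pm}\equiv 0$ would force $v^{\pm}$ to be constant on each side, contradicting $u\not\equiv 0$.

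For the quantitative improvement I would aim to bound $\int_{\mathbb{S}^{m+1}}|\mathrm{Hess}\,v|^{2}$ from below by something of the form $c(\Lambda)\int_{\mathbb{S}^{m+1}}|\nabla v|^{2}$ with $c(\Lambda)>0$ explicit. The natural inputs are: (a) Kato's inequality $|\nabla|\nabla v||\le|\mathrm{Hess}\,v|$, which converts Hessian control into control of $|\nabla v|$; (b) the Poincaré inequality on the sphere, $\int|\nabla f|^{2}\ge(m+1)\int f^{2}$ for $f\perp 1$, which supplies the factor $m(m+1)$ in the numerator of the advertised improvement; and (c) the pointwise bound $|A(\nabla u,\nabla u)|\le\Lambda|\nabla_{\Sigma}u|^{2}$ together with $\int_{\Sigma}|\nabla_{\Sigma}u|^{2}=\lambda_1\int_{\Sigma}u^{2}$, which introduces $\Lambda$ explicitly when combined with a Cauchy--Schwarz step with tunable parameter $\varepsilon$; optimizing $\varepsilon\sim 1/\Lambda$ is what should generate the quadratic factor $(12\Lambda+m+11)^{2}$ in the denominator.

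The hard step will be turning the one-sided Reilly identities into a quantitative Rayleigh quotient estimate with the precise constants stated. Concretely, after writing each side as
\[
\int_{\Omega^{\pm}}|\mathrm{Hess}\,v^{\pm}|^{2}+(m-2\lambda_1)\int_{\Omega^{\pm}}|\nabla v^{\pm}|^{2}=\mp\int_{\Sigma}A(\nabla u,\nabla u),
\]
one must carefully control the boundary term $\int_{\Sigma}A(\nabla u,\nabla u)$ in terms of $\|u\|_{L^{2}(\Sigma)}$, $\|\nabla_{\Sigma}u\|_{L^{2}(\Sigma)}$ and $\|v\|_{L^{2}(\mathbb{S}^{m+1})}$, decomposing $\nabla v^{\pm}=\nabla_{\Sigma}u+(\partial_{\nu^{\pm}}v^{\pm})\nu^{\pm}$ and making use of integration by parts. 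The specific constants $12$, $11$, $32$, $8$ will arise from the explicit bookkeeping of Reilly's formula together with the balancing of parameters at the final step. The argument is sharpest for large $\Lambda$; the complementary case $\Lambda\le\sqrt{m}$ follows directly from Simons' rigidity as noted in the introduction.
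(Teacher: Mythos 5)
Your setup (Jordan--Brouwer separation, harmonic extension, Reilly's formula, and the rigidity argument for the strict inequality $\lambda_1>m/2$) is correct and consistent with Choi--Wang and Barros--Bessa. However, the quantitative part of your plan has a genuine gap: the mechanism you propose for extracting an explicit $\Lambda$-dependent improvement does not work, and the idea that actually carries the proof is absent. After summing the two Reilly identities you are left with $\int|\overline{\nabla}^2 v|^2=(2\lambda_1-m)\int|\overline{\nabla} v|^2$, and you need a \emph{lower} bound $\int|\overline{\nabla}^2 v|^2\ge c(\Lambda)\int|\overline{\nabla} v|^2$. Kato plus the spherical Poincar\'e inequality only gives $\int|\overline{\nabla}^2 v|^2\ge (m+1)\bigl(\int|\overline{\nabla} v|^2-\operatorname{Vol}^{-1}(\int|\overline{\nabla} v|)^2\bigr)$, and the subtracted mean term can absorb everything (it does, e.g., when $|\overline{\nabla} v|$ is nearly constant, as for coordinate functions on an equator), so no positive $c(\Lambda)$ comes out. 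Likewise, bounding $|\int_\Sigma A(\nabla u,\nabla u)|\le\Lambda\lambda_1\int_\Sigma u^2$ only trades one boundary quantity for another; the crux is to compare boundary data of $v$ with its interior $L^2$ norms, and none of your ingredients (a)--(c) addresses that. In particular, nothing in your outline explains how the geometry of $\Sigma$ --- which must enter through something like the focal radius $\arctan(1/k_{\max})$ --- gets into the estimate.

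The paper's route is different in two respects. First, it stays one-sided (choosing $\Omega$ so that $\int_\Sigma\langle A\nabla\varphi,\nabla\varphi\rangle\ge 0$) and, following Barros--Bessa, applies Reilly's formula with the refinement $|\overline{\nabla}^2 g|^2\ge\frac{1}{m+1}(\overline{\triangle}g)^2$ to an auxiliary function $g$ solving $\overline{\triangle}g=u$, $g|_\Sigma=t\varphi$; optimizing over $t$ yields $\lambda_1\ge m/(1+\sqrt{1-(m+1)/Q})$, where $Q$ is the Rayleigh quotient of the harmonic extension on $\Omega$. (This is also where the factor $m+1$ in the numerator of \eqref{EstPrinc2} comes from --- not from the Poincar\'e inequality on the sphere.) Second, and this is the heart of the matter that your proposal would still need, it proves an \emph{inverse} Poincar\'e inequality $\int_\Omega u^2\ge C_2(m,\Lambda)\int_\Omega|\overline{\nabla} u|^2$: the Dirichlet energy is bounded above by testing against a cut-off of the normal extension of $\varphi$ supported in a tube of width $\sim\arctan(1/k_{\max})$, while $\int_\Omega u^2$ is bounded below via a differential inequality for $\eta(t)=\int_{\{d>t\}}u^2$ using the mean curvature bound $H_{\Sigma_t}\le 2\Lambda$ for the parallel hypersurfaces. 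The constants $12$, $11$, $32$, $8$ all come from this tube construction, not from tuning a Cauchy--Schwarz parameter in the boundary term. Without an argument of this type (or some other quantitative link between the boundary trace and the interior norms), your outline cannot reach \eqref{EstPrinc2}.
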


\begin{rem}
A recent improvement to \eqref{1} is given by Duncan-Sire-Spruck in \cite{duncan2023improved}, where they proved that 
\[
\lambda_1\geq \frac{m}{2}+\frac{a(m)}{\Lambda^6+b(m)},
\]
for specific functions $a(m,\Lambda)$ and $b(m)$ (see \eqref{ayb}). By simple comparison of the order of growth it is easy to see that our estimate is bigger  when $\Lambda$ is big enough.  Indeed a computation at the end of this paper shows  that it is bigger for every $m$ and $\Lambda$.
\end{rem}
\begin{rem} Our estimate depends on the  norm of second fundamental form and is not sharp.  It would be interesting to find a lower bound which is bigger than $\frac{m}2$ and depends only on $m$. 

We can combine \eqref{EstPrinc2} with the Yang-Yau inequality \cite{YY80} is an area bound for embedded minimal surfaces in $\mathbb{S}^3$ in terms of their genus. This plays a crucial role in the compactness theory of Choi-Schoen in \cite{CS85}.  They find a constant $C(\chi)$ which is an upper bound for  the norm of the second fundamental form of any compact minimal embedded minimal surface in $\mathbb{S}^{3}$  with Euler number $\chi$.

 On the other hand,  Yau’s conjecture is true for embedded minimal surfaces in $S^3$ which are invariant under a finite
group of reflections (see \cite{soret}). And  Zhao \cite{zhao2023first} proved that there is a lower bound depending on the genus. We also have the following corollary from Theorem \ref{TP}. 
\end{rem}

\begin{cor}\label{corTP}
 Let $C(\chi)$ be the constant in Choi-Schoen's theorem. Then the first nonzero eigenvalue of the Laplacian of a compact embedded minimal surface $\Sigma$ in $\mathbb{S}^3$ with Euler number $\chi$  satisfies 
 $$
  \lambda_1(\Sigma)> 1+\frac{3}{16(12C(\chi)+13)^2+4}.
 $$
\end{cor}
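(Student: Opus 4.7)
The plan is to derive the corollary directly from Theorem \ref{TP} by specializing to the case of surfaces ($m=2$) in $\mathbb{S}^{3}$ and invoking the Choi-Schoen bound on the second fundamental form in terms of the Euler characteristic.

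First, I would recall the Choi-Schoen theorem, which asserts that for any compact embedded minimal surface $\Sigma\subset\mathbb{S}^{3}$ of Euler number $\chi$, the norm $|A|$ of its second fundamental form satisfies $\Lambda=\max_{\Sigma}|A|\le C(\chi)$ for some explicit constant $C(\chi)$ depending only on $\chi$ (this is precisely the constant appearing in the statement of the corollary). Next, I would apply Theorem \ref{TP} with $m=2$, obtaining
\[
\lambda_{1}(\Sigma)>1+\frac{6}{32(12\Lambda+13)^{2}+8}=1+\frac{3}{16(12\Lambda+13)^{2}+4}.
\]

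Finally, since the function $\Lambda\mapsto \dfrac{3}{16(12\Lambda+13)^{2}+4}$ is strictly decreasing in $\Lambda\ge 0$, replacing $\Lambda$ by its upper bound $C(\chi)$ only decreases the right-hand side, and we conclude
\[
\lambda_{1}(\Sigma)>1+\frac{3}{16(12C(\chi)+13)^{2}+4},
\]
which is the desired estimate. There is no real obstacle: the entire argument is a one-line substitution combined with the monotonicity observation, so the only things to be careful about are the arithmetic simplification of the denominator (turning $32\cdot(\cdot)^{2}+8$ into $16\cdot(\cdot)^{2}+4$ after cancelling the factor of $2$ with the numerator $6/2=3$) and the remark that $C(\chi)$ is the Choi-Schoen bound, so the hypothesis $\Lambda\le\sqrt{2}$ need not be treated separately: in that case Theorem \ref{TP} gives the stronger conclusion $\lambda_{1}(\Sigma)=2>1+\frac{3}{16(12C(\chi)+13)^{2}+4}$, so the stated inequality remains valid.
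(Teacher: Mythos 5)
Your proof is correct and is exactly the argument the paper intends (the paper states the corollary without proof, as an immediate consequence of Theorem \ref{TP} with $m=2$ and the Choi--Schoen bound $\Lambda\le C(\chi)$). The arithmetic simplification, the monotonicity in $\Lambda$, and the handling of the case $\Lambda\le\sqrt{2}$ are all in order.
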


The rest of the paper is divided into two sections.

In Section \ref{2}, we recall the Reilly's formula and reformulate a result in \cite{barros2004estimates}. We also give a lower bound for $\lambda_1(\Sigma)$ in terms of the Rayleigh quotient of its harmonic extension. Namely

\begin{equation*}
  \lambda_1(\Sigma)\geq \dfrac{m}{1+\sqrt{1-\frac{m+1}{Q}}},
\end{equation*}
where
\begin{equation*} 
  Q:=\left(\displaystyle\int_{\Omega}|\overline{\nabla} u|^2dV\right)\left(\displaystyle\int_{\Omega}u^2dV\right)^{-1}
\end{equation*}
and $u$ is a solution to

\begin{equation*} 
  \left\{
    \begin{array}{ll}
     \overline{\triangle}u=0 & \textrm{in}\, \Omega \\
      u=\varphi, & \textrm{in}\, \Sigma,
    \end{array}
  \right.
\end{equation*}
where the bars in  the expressions  above refer to operations in the ambient sphere.
Here, $\varphi$ is an eigenfunction of the Laplacian $\triangle$ associated to the eigenvalue $\lambda_1(\Sigma)$ and $\Omega$ is a component of $\mathbb{S}^{m+1}-\Sigma$ which is chosen appropriately. We call $u$ \textit{the harmonic extension of $\varphi$ to $\Omega$.} 
Then we prove Theorem \ref{TP} assuming the validity of an appropriate  estimate for $Q$.

In Section \ref{sQ} we make a quick review of the normal exponential map and  we prove the estimate
\begin{equation}\label{PI}
\displaystyle\int_{\Omega}u^2 dV\geq C_2(m,\Lambda)\int_{\Omega}|\overline{\nabla} u|^2dV, 
\end{equation}
  which may be considered as an inverse Poincaré type inequality, i.e. $C_2$ is an upper estimate for $Q$.   We finish the paper by comparing our estimate with the one in \cite{duncan2023improved}.

It is important to note that, as in \cite{duncan2023improved}, we also use Reilly's formula and an upper bound of the mean curvature of the parallel surfaces to $\Sigma$. However, we provide an upper bound of $|\overline{\nabla} u|_{L^2(\Omega)}^2$ that depends only on the geometry of $\Sigma$, via an elementary result on harmonic extensions.

We need to point out that our techniques can be improved and generalized in order to obtain an estimate for the first eigenvalue of a minimal surface embedded in an ambient of bounded sectional curvature. However, the sharp bound can only be achieved by proving that $Q=m+1$. Further  work will be part of the PhD thesis of the second author. In fact, this work was in preparation before we had access to \cite{duncan2023improved}.

\section{A first eigenvalue estimate via Rayleigh quotient}\label{2}
In this section we will review Reilly's formula and give a lower bound for the nonzero first eigenvalue  $\lambda_1(\Sigma)$ in terms of the Rayleigh quotient of the harmonic extension of the corresponding eigenfunction. As a consequence, we will prove our main result, Theorem \ref{TP}, assuming the inequality \eqref{PI} which will be explicitly stated in   Theorem \ref{DIP}. 

 From now on, $\Sigma$ will denote a closed embedded hypersurface of $\mathbb{S}^{m+1}$. It follows that $\Sigma$  divides the sphere into two components $\Omega_1$ and $\Omega_2$, where $\partial \Omega_1=\partial\Omega_2=\Sigma$ (see \cite{choi1983first}). Set $\nu$ as the unit normal of  $\Sigma$ pointing outward to $\Omega_1$ ($-\nu$ as the unit normal of $\Sigma$ and pointing outward to $\Omega_2$) and $A_{\nu}$ the second fundamental form of $\Sigma$ with respect to $\nu$.

Let $\varphi\in C^\infty(\Sigma)$. We can assume, without loss of generality, it satisfies the property $$\int_{\Sigma}\langle A_{\nu}\nabla \varphi,\nabla\varphi\rangle dS\geq 0$$ and denote $\Omega:=\Omega_1$. Otherwise we can choose $\Omega=\Omega_2.$ Let us denote all the functions of class $C^2$ that extend the function $\varphi$ over $\overline{\Omega}$ as $C^2_{\varphi}(\overline{\Omega})$. The following equation is known as Reilly's formula (see \cite{reilly1977applications}).

\begin{lema}\label{lem2.2}
  For all $u\in C^2_{\varphi}(\overline{\Omega})$ we have
\begin{equation}\label{reylli}
 \displaystyle\int_{\Omega}\left[(\overline{\triangle} u)^2-|\overline{\nabla}^2 u|^2-\textrm{Ric}_{\mathbb{S}^{m+1}(1)}(\overline{\nabla} u,\overline{\nabla} u)\right]dV=\displaystyle\int_{\Sigma}\left[\langle A_{\nu}\nabla \varphi,\nabla \varphi\rangle+2\frac{\partial u}{\partial \nu}\triangle \varphi+mH_{\Sigma}(u_{\nu})^2\right]dS.
\end{equation}
 where $\overline{\triangle} u, \overline{\nabla} u$ and $\overline{\nabla}^2 u$ denote the Laplacian, gradient and Hessian  of $u$ in $\Omega$, while $\triangle \varphi$ and $\nabla \varphi$ denote  the Laplacian and the gradient of $\varphi$ in $\Sigma$ with respect to the induced metric of $\Omega$. On the other hand, $\frac{\partial u}{\partial \nu}:=\langle\nabla u,\nu\rangle$ denotes the outward normal derivative of $u$ in $\Sigma$, $\textrm{Ric}_{\mathbb{S}^{m+1}}$ is the Ricci tensor of $\mathbb{S}^{m+1}$ and $H_{\Sigma}:=\textrm{trace}(A_{\nu})$ is the mean curvature of $\Sigma$.
\end{lema}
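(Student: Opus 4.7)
The plan is to derive Reilly's formula by combining the Bochner identity in the interior $\Omega$ with the divergence theorem, and then reorganising the resulting boundary integrand on $\Sigma$ via the Gauss formula. First, I would start from the pointwise Bochner identity for $u\in C^{2}_{\varphi}(\overline{\Omega})$,
$$\tfrac{1}{2}\overline{\triangle}|\overline{\nabla}u|^{2}=|\overline{\nabla}^{2}u|^{2}+\langle\overline{\nabla}u,\overline{\nabla}(\overline{\triangle}u)\rangle+\textrm{Ric}_{\mathbb{S}^{m+1}}(\overline{\nabla}u,\overline{\nabla}u),$$
integrate it over $\Omega$, and apply the divergence theorem twice: once on the left-hand side, producing the boundary term $\int_{\Sigma}\tfrac{1}{2}\partial_{\nu}|\overline{\nabla}u|^{2}\,dS$, and once on the cross term through $\langle\overline{\nabla}u,\overline{\nabla}(\overline{\triangle}u)\rangle=\mathrm{div}\bigl((\overline{\triangle}u)\overline{\nabla}u\bigr)-(\overline{\triangle}u)^{2}$. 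Rearranging yields the intermediate identity
$$\int_{\Omega}\bigl[(\overline{\triangle}u)^{2}-|\overline{\nabla}^{2}u|^{2}-\textrm{Ric}(\overline{\nabla}u,\overline{\nabla}u)\bigr]\,dV=\int_{\Sigma}\Bigl[(\overline{\triangle}u)\,u_{\nu}-\tfrac{1}{2}\partial_{\nu}|\overline{\nabla}u|^{2}\Bigr]\,dS,$$
so only the boundary integrand remains to be rewritten.

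Second, I would fix on $\Sigma$ an orthonormal frame $\{e_{1},\dots,e_{m},\nu\}$ and decompose $\overline{\nabla}u=\nabla\varphi+u_{\nu}\nu$. The Gauss formula, applied to tangent--tangent and tangent--normal pairs, gives
$$\overline{\nabla}^{2}u(e_{i},e_{j})=\nabla^{2}\varphi(e_{i},e_{j})+A_{\nu}(e_{i},e_{j})\,u_{\nu},\qquad \overline{\nabla}^{2}u(e_{i},\nu)=e_{i}(u_{\nu})-A_{\nu}(e_{i},\nabla\varphi).$$
Tracing the first identity yields $\overline{\triangle}u|_{\Sigma}=\triangle\varphi+H_{\Sigma}u_{\nu}+u_{\nu\nu}$, and rewriting $\tfrac{1}{2}\partial_{\nu}|\overline{\nabla}u|^{2}=\overline{\nabla}^{2}u(\overline{\nabla}u,\nu)$ using the second identity together with $\overline{\nabla}u=\nabla\varphi+u_{\nu}\nu$ gives
$$\tfrac{1}{2}\partial_{\nu}|\overline{\nabla}u|^{2}=\langle\nabla\varphi,\nabla u_{\nu}\rangle-\langle A_{\nu}\nabla\varphi,\nabla\varphi\rangle+u_{\nu}u_{\nu\nu}.$$
Subtracting these, the $u_{\nu}u_{\nu\nu}$ terms cancel, leaving the pointwise boundary identity $(\overline{\triangle}u)u_{\nu}-\tfrac{1}{2}\partial_{\nu}|\overline{\nabla}u|^{2}=u_{\nu}\triangle\varphi+H_{\Sigma}u_{\nu}^{2}-\langle\nabla\varphi,\nabla u_{\nu}\rangle+\langle A_{\nu}\nabla\varphi,\nabla\varphi\rangle$. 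Integrating over the closed hypersurface $\Sigma$ and using the tangential integration by parts $\int_{\Sigma}\langle\nabla\varphi,\nabla u_{\nu}\rangle\,dS=-\int_{\Sigma}u_{\nu}\triangle\varphi\,dS$ converts that middle term into a second copy of $u_{\nu}\triangle\varphi$, producing the right-hand side of \eqref{reylli}.

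The main technical obstacle I expect is the sign and convention bookkeeping in the boundary computation: the sign of the tangent--normal term from the Gauss formula depends on the outward orientation of $\nu$ adopted for $\Omega$, and the coefficient in front of $H_{\Sigma}u_{\nu}^{2}$ must be reconciled with the paper's convention for $H_{\Sigma}$ (the trace versus the normalised average of the principal curvatures). Once those conventions are fixed, the remaining manipulations — Bochner, divergence theorem, and two integrations by parts — are entirely standard.
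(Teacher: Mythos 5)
Your derivation is correct, and it is essentially the proof in Reilly's original paper, which the authors simply cite rather than reproduce: Bochner's identity integrated over $\Omega$, the divergence theorem applied to both $\tfrac12\overline{\triangle}|\overline{\nabla}u|^2$ and $\mathrm{div}\bigl((\overline{\triangle}u)\overline{\nabla}u\bigr)$, the Gauss--Weingarten decomposition of $\overline{\nabla}^2u$ along $\Sigma$, and a final tangential integration by parts that converts $-\langle\nabla\varphi,\nabla u_\nu\rangle$ into a second copy of $u_\nu\triangle\varphi$. The cancellation of the $u_\nu u_{\nu\nu}$ terms and the signs you obtain are consistent with the paper's conventions (outward $\nu$ for $\Omega$, $\triangle=\mathrm{div}\,\nabla$, and the freedom to flip $\Omega$ so that $\int_\Sigma\langle A_\nu\nabla\varphi,\nabla\varphi\rangle\ge 0$).

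Two bookkeeping remarks, both of which you anticipated. First, your boundary computation produces $\mathrm{trace}(A_\nu)\,u_\nu^2$, whereas the lemma as printed reads $mH_\Sigma(u_\nu)^2$ with $H_\Sigma:=\mathrm{trace}(A_\nu)$; the extra factor of $m$ is a redundancy in the paper's statement (the classical formula carries $m\bar H$ with $\bar H$ the \emph{normalized} mean curvature, which equals the trace), and it is harmless here since $\Sigma$ is minimal so the term vanishes in every application. Second, the Bochner identity as you invoke it uses third derivatives of $u$, while the lemma is stated for $u\in C^2_\varphi(\overline{\Omega})$; this is settled by the standard approximation argument, or by observing that in the paper $u$ is always either harmonic or a solution of $\overline{\triangle}g=u$, hence smooth in $\Omega$.
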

 The following corollary   follows from \eqref{reylli} by using that  $|\overline{\nabla}^2 u|^2\geq \frac{1}{m+1}(\overline{\triangle} u)^2$.
\begin{cor}\label{cor1}
Let $\Sigma$ be a closed embedded  minimal hypersurface in the unit sphere $\mathbb{S}^{m+1}$. For any $v\in C^2(\overline{\Omega})$ we assume $\Omega $ is chosen so that $\int_{\partial \Omega}\langle A_{\nu}(\nabla (v|_{\partial \Omega})),\nabla(v|_{\partial \Omega})\rangle dS\geq 0$. Then
\begin{equation}\label{reylly2}
 \displaystyle\int_{\Omega}\left[\frac{m}{m+1}(\overline{\triangle} v)^2-m|\overline{\nabla} v|^2\right]dV\displaystyle -2\int_{\partial \Omega}\frac{\partial v}{\partial \nu}\triangle (v|_{\partial \Omega})dS\ge 0.
\end{equation}

\end{cor}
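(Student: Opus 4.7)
The plan is to derive this as a direct consequence of Reilly's formula (Lemma \ref{lem2.2}) applied with $u = v$ and $\varphi = v|_{\partial\Omega}$, by discarding the terms that have a definite sign and using the elementary Hessian--Laplacian Cauchy--Schwarz inequality $|\overline{\nabla}^2 v|^2 \geq \frac{1}{m+1}(\overline{\triangle} v)^2$.

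First I would specialize the ambient geometric terms in \eqref{reylli}. Since the unit sphere $\mathbb{S}^{m+1}$ has constant sectional curvature $1$, its Ricci tensor equals $m$ times the metric, so $\textrm{Ric}_{\mathbb{S}^{m+1}}(\overline{\nabla} v,\overline{\nabla} v) = m|\overline{\nabla} v|^2$. The minimality assumption on $\Sigma$ gives $H_\Sigma = 0$, eliminating the $mH_\Sigma (u_\nu)^2$ boundary term entirely. With these substitutions, Reilly's formula reduces to
\begin{equation*}
\int_\Omega\left[(\overline{\triangle} v)^2 - |\overline{\nabla}^2 v|^2 - m|\overline{\nabla} v|^2\right]dV = \int_\Sigma\langle A_\nu\nabla\varphi,\nabla\varphi\rangle\,dS + 2\int_\Sigma\frac{\partial v}{\partial\nu}\triangle\varphi\,dS,
\end{equation*}
where $\varphi = v|_{\partial\Omega}$.

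Next I would apply the pointwise inequality $|\overline{\nabla}^2 v|^2 \geq \frac{1}{m+1}(\overline{\triangle} v)^2$ (which follows by Cauchy--Schwarz on the eigenvalues of the Hessian, since the trace is $\overline{\triangle} v$ and the matrix is $(m+1)\times(m+1)$). This yields the upper bound
\begin{equation*}
\int_\Omega\left[(\overline{\triangle} v)^2 - |\overline{\nabla}^2 v|^2 - m|\overline{\nabla} v|^2\right]dV \leq \int_\Omega\left[\frac{m}{m+1}(\overline{\triangle} v)^2 - m|\overline{\nabla} v|^2\right]dV
\end{equation*}
for the left-hand side of Reilly's identity. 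On the right-hand side, the hypothesis on the choice of $\Omega$ ensures $\int_\Sigma\langle A_\nu\nabla\varphi,\nabla\varphi\rangle\,dS \geq 0$, so that term can be dropped with the inequality preserved.

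Combining the two bounds gives
\begin{equation*}
\int_\Omega\left[\frac{m}{m+1}(\overline{\triangle} v)^2 - m|\overline{\nabla} v|^2\right]dV \geq 2\int_{\partial\Omega}\frac{\partial v}{\partial\nu}\triangle(v|_{\partial\Omega})\,dS,
\end{equation*}
which is equivalent to \eqref{reylly2} after moving the boundary term to the left. There is no real obstacle here; the proof is essentially bookkeeping once one recognizes which hypothesis is eating which term in \eqref{reylli}. The only thing to be careful about is the sign of the Hessian--Laplacian inequality, which must be used in the direction that \emph{decreases} the quantity $(\overline{\triangle} v)^2 - |\overline{\nabla}^2 v|^2$; since that quantity appears on a side of an equality whose other side already has a controllable sign, no pointwise information about $\overline{\triangle} v$ versus $|\overline{\nabla} v|^2$ is needed.
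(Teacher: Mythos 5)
Your proof is correct and follows exactly the route the paper intends: specialize Reilly's formula \eqref{reylli} to the round sphere (so $\textrm{Ric}=m\,g$) and to a minimal boundary (so $H_\Sigma=0$), apply the trace inequality $|\overline{\nabla}^2 v|^2\geq\frac{1}{m+1}(\overline{\triangle} v)^2$, and discard the nonnegative second fundamental form term using the hypothesis on the choice of $\Omega$. All signs and directions of the inequalities are handled correctly, so there is nothing to add.
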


The following result is proven by Barros and Bessa in \cite{barros2004estimates}. We include here a simpler proof. 
\begin{lema}Let $\Sigma$ be a closed embedded  minimal hypersurface in the unit sphere $\mathbb{S}^{m+1}$. Assume that $\varphi$ is a eigenfunction of $\triangle$ on $\Sigma$ corresponding to the eigenvalue $\lambda_1(\Sigma)$ and that $u$ is its harmonic extension to $\Omega$, i.e.
\begin{equation}\label{extarmo}
  \left\{
    \begin{array}{ll}
     \overline{\triangle}u=0 & \textrm{in}\, \Omega, \\
      u=\varphi & \textrm{in}\, \Sigma.
    \end{array}
  \right.
\end{equation}
Then for all $t\in \mathbb{R}$,
\begin{equation}\label{pol}
 (2\lambda_1(\Sigma)-m)Qt^2+2\lambda_1(\Sigma)t+\dfrac{m}{m+1}\geq 0,
\end{equation}
where $Q$ is defined by 
\begin{equation}\label{Qu}
  Q:=\frac{\displaystyle\int_{\Omega}|\overline{\nabla} u|^2dV}{\displaystyle\int_{\Omega}u^2dV}.
\end{equation}

\end{lema}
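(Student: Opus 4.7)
My plan is to apply Corollary~\ref{cor1} (the Reilly-type inequality \eqref{reylly2}) to the one-parameter family of test functions
\[
v_t := tu + w,\qquad t\in\mathbb{R},
\]
where $w\in C^2(\overline{\Omega})$ is the solution of the Poisson problem $\overline{\triangle} w=u$ in $\Omega$ with $w|_\Sigma=0$, whose existence is guaranteed by standard elliptic theory. The motivation is that harmonicity of $u$ forces $\overline{\triangle} v_t=\overline{\triangle} w=u$, so the $(\overline{\triangle} v_t)^2$ term of \eqref{reylly2} produces $\tfrac{m}{m+1}\int_\Omega u^2\,dV$, matching the constant coefficient of the target polynomial; the coefficients of $t^2$ and $t$ will arise respectively from the $|\overline{\nabla} v_t|^2$ integral and from the boundary integral combined with $\triangle\varphi=-\lambda_1\varphi$.

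First I would check admissibility: since $v_t|_\Sigma=t\varphi$, one has
\[
\int_\Sigma\langle A_\nu\nabla(v_t|_\Sigma),\nabla(v_t|_\Sigma)\rangle\,dS = t^2\int_\Sigma\langle A_\nu\nabla\varphi,\nabla\varphi\rangle\,dS\ge 0
\]
by the choice of $\Omega$ fixed before Corollary~\ref{cor1}. Writing $I:=\int_\Omega|\overline{\nabla} u|^2\,dV$ and $J:=\int_\Omega u^2\,dV$, two applications of Green's formula dispose of the nontrivial integrals. Harmonicity of $u$ together with $w|_\Sigma=0$ yields
\[
\int_\Omega\langle\overline{\nabla} u,\overline{\nabla} w\rangle\,dV = \int_\Sigma w\,\tfrac{\partial u}{\partial\nu}\,dS-\int_\Omega w\,\overline{\triangle} u\,dV=0,
\]
so the cross term in $|\overline{\nabla} v_t|^2$ drops out; similarly,
\[
\int_\Sigma\varphi\,\tfrac{\partial w}{\partial\nu}\,dS = \int_\Omega u\,\overline{\triangle} w\,dV = \int_\Omega u^2\,dV = J.
\]

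Substituting $\overline{\triangle} v_t=u$, $\overline{\nabla} v_t=t\overline{\nabla} u+\overline{\nabla} w$, $\triangle(v_t|_\Sigma)=-t\lambda_1\varphi$, along with the two identities above, into \eqref{reylly2} yields
\[
\frac{mJ}{m+1}-m\Bigl(t^2 I+\int_\Omega|\overline{\nabla} w|^2\,dV\Bigr)+2\lambda_1 t^2 I+2\lambda_1 J\,t\ge 0,
\]
which rearranges to
\[
(2\lambda_1-m)\,I\,t^2+2\lambda_1 J\,t+\tfrac{mJ}{m+1}\;\ge\; m\int_\Omega|\overline{\nabla} w|^2\,dV\;\ge\;0.
\]
Dividing by $J>0$ and recalling $Q=I/J$ produces \eqref{pol} verbatim. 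I do not foresee any essential obstacle: the only small trick is the ordering $v_t=tu+w$ rather than the naive $u+tw$, which is what forces $(\overline{\triangle} v_t)^2$ to contribute the $t$-independent constant $\tfrac{m}{m+1}$ and places $(2\lambda_1-m)Q$ as the coefficient of $t^2$; the leftover term $m\int_\Omega|\overline{\nabla} w|^2\,dV$ is simply discarded as non-negative.
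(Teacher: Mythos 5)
Your proposal is correct and is essentially the paper's own proof: your $v_t=tu+w$ is, by uniqueness of the Dirichlet problem, exactly the paper's test function $g$ (the solution of $\overline{\triangle}g=u$ in $\Omega$ with $g=t\varphi$ on $\Sigma$), and your explicit orthogonal splitting $\int_\Omega|\overline{\nabla}v_t|^2\,dV=t^2\int_\Omega|\overline{\nabla}u|^2\,dV+\int_\Omega|\overline{\nabla}w|^2\,dV$ is just a transparent rewriting of the paper's computation of $\int_\Omega|\overline{\nabla}g-t\overline{\nabla}u|^2\,dV\ge 0$. All the Green's-formula identities and the final discarding of the non-negative leftover term match the paper's argument.
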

\begin{proof}

%
  For $t\neq 0$, let $g$ be the solution of the problem
$$\left\{
    \begin{array}{ll}
     \triangle g=u, & \textrm{in}\,\Omega, \\
      g=t\varphi, & \textrm{in}\,\Sigma ,
    \end{array}
  \right.
$$
where $u$ is a solution to (\ref{extarmo}). Then, from (\ref{reylly2}) applied to $g$, we have
\begin{equation}\label{reylly3}
 \displaystyle\int_{\Omega}\left(\frac{m}{m+1}u^2-m|\overline{\nabla} g|^2\right)dV\displaystyle +2t\lambda_1(\Sigma)\int_{\partial \Omega}\varphi\frac{\partial g}{\partial \nu} dS\ge 0.
\end{equation}

On the other hand, by  \eqref{extarmo} and Stokes' formula 
 \begin{equation}\label{green1}
      \displaystyle\int_{\Omega}\langle\overline{\nabla} g,\overline{\nabla} u\rangle dV=-\displaystyle\int_{\Omega}g\overline{\Delta} u\,dV+\displaystyle\int_{\Sigma}g\frac{\partial u}{\partial\nu} dS=t\displaystyle\int_{\Sigma}\varphi\frac{\partial u}{\partial\nu} dS=t\displaystyle\int_{\Omega}|\overline{\nabla} u|^2\,dV.
    \end{equation}
    Hence, by \eqref{green1},
$$\begin{array}{rcl}
      0\le\displaystyle\int_{\Omega}|\overline{\nabla} g-t\overline{\nabla} u|^2\,dV & = &  \displaystyle\int_{\Omega}\left(|\overline{\nabla} g|^2-2t\langle\overline{\nabla} g,\overline{\nabla} u\rangle +t^2|\overline{\nabla} u|^2\right)dV\\
       &=&\displaystyle\int_{\Omega}|\overline{\nabla} g|^2dV-t^2\displaystyle\int_{\Omega}|\overline{\nabla} u|^2 dV.
    \end{array}$$
Therefore
\begin{equation}\label{des}
  \displaystyle\int_{\Omega}|\overline{\nabla} g|^2dV\geq t^2\displaystyle\int_{\Omega}|\overline{\nabla} u|^2dV.
\end{equation}
Similarly
 \begin{equation}\label{green2}
\displaystyle\int_{\Sigma}\varphi\frac{\partial g}{\partial\nu} dS= \displaystyle\int_{\Omega}\langle\overline{\nabla} u,\overline{\nabla} g\rangle dV+\displaystyle\int_{\Omega}u\overline{\Delta} g\,dV=t\displaystyle\int_{\Omega}|\overline{\nabla} u|^2\,dV+\displaystyle\int_{\Omega}u^2\,dV.
    \end{equation}
From (\ref{reylly3}) and  (\ref{green2}) we have
$$
 \displaystyle\int_{\Omega}\left(\frac{m}{m+1}u^2-m|\overline{\nabla} g|^2\right)dV\displaystyle +2t^2\lambda_1(\Sigma)\displaystyle\int_{\Omega}|\overline{\nabla} u|^2\,dV+2t\lambda_1(\Sigma)\displaystyle\int_{\Omega}u^2\,dV\ge 0,
$$
and so from \eqref{des},
\begin{equation}\label{reilly4}(2\lambda_1(\Sigma)-m)t^2\displaystyle\int_{\Omega}|\overline{\nabla} u|^2dV+2\lambda_1(\Sigma)t\displaystyle\int_{\Omega}u^2dV+\dfrac{m}{m+1}\displaystyle\int_{\Omega}u^2dV \geq 0.
\end{equation}
Then (\ref{pol}) follows by dividing this last inequality by $\displaystyle\int_{\Omega}u^2dV$ and using the definition of $Q$ in \eqref{Qu}.
\end{proof}
Next we obtain a first estimate for $\lambda_1(\Sigma)$ which is a corollary of Barros-Bessa's theorem.
\begin{thm}Let $\Sigma$ be a closed embedded  minimal hypersurface in the unit sphere $\mathbb{S}^{m+1}$. Then
   \begin{equation}\label{estQ}
 \lambda_1(\Sigma)\geq \dfrac{m}{1+\sqrt{1-\frac{m+1}{Q}}}.
\end{equation}
\end{thm}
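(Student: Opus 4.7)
The plan is to read the inequality (\ref{pol}) as a quadratic polynomial in the real parameter $t$ which is required to be non-negative for every $t\in\mathbb{R}$, and then to convert this constraint into the stated lower bound on $\lambda_1(\Sigma)$ by a standard discriminant analysis.

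First I would examine the leading coefficient $(2\lambda_1(\Sigma)-m)Q$. Since $Q>0$ and the constant term $m/(m+1)$ is strictly positive, a non-positive leading coefficient would force the polynomial to tend to $-\infty$ for large $|t|$, contradicting (\ref{pol}). This already implies $2\lambda_1(\Sigma)-m>0$, and recovers along the way the Choi--Wang/Barros--Bessa bound $\lambda_1(\Sigma)>m/2$. Granted positivity of the leading coefficient, the non-negativity of (\ref{pol}) on all of $\mathbb{R}$ is equivalent to the discriminant being non-positive, which after rearrangement becomes
\begin{equation*}
(m+1)\lambda_1(\Sigma)^2-2mQ\,\lambda_1(\Sigma)+m^2Q\leq 0.
\end{equation*}

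I would then view this as a quadratic inequality in the unknown $\lambda_1(\Sigma)$ with positive leading coefficient $m+1$. Its solution set is the closed interval bounded by the two real roots, and the smaller root supplies the desired lower bound. A short rationalization using $(1-\alpha)(1+\alpha)=(m+1)/Q$ with $\alpha=\sqrt{1-(m+1)/Q}$ rewrites this smaller root in the clean form $m/(1+\sqrt{1-(m+1)/Q})$ appearing in (\ref{estQ}).

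There is no substantive obstacle in this step; it is a purely algebraic consequence of the previous lemma, and essentially all of the analytic work has already been carried out in Reilly's formula and the harmonic extension construction that yielded (\ref{pol}). The one point worth noting in passing is that the square root in (\ref{estQ}) is automatically well defined: if $Q<m+1$ the quadratic in $\lambda_1(\Sigma)$ above would be strictly positive for every real $\lambda_1(\Sigma)$, which is incompatible with (\ref{pol}) admitting any solution. Hence the inequality $Q\geq m+1$ holds automatically, and $1-(m+1)/Q\geq 0$.
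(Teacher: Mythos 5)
Your proof is correct. It starts from the same input as the paper, namely the quadratic inequality (\ref{pol}), but processes it differently. The paper first derives $Q\geq m+1$ by substituting the particular value $t=\tfrac{-\lambda_1(\Sigma)}{(m+1)(2\lambda_1(\Sigma)-m)}$, and then obtains the bound by explicitly maximizing the rational function $\beta(t)=1-\tfrac{1}{Qt+1}-\tfrac{1}{(m+1)(Qt^2+t)}$ over $\{t:\,t(Qt+1)>0\}$, locating the critical point $t_0$ and evaluating there. You instead observe that non-negativity of a quadratic with positive leading coefficient on all of $\mathbb{R}$ is exactly the condition that its discriminant be non-positive, which yields $(m+1)\lambda_1^2-2mQ\lambda_1+m^2Q\leq 0$, and then read off the lower bound as the smaller root of this quadratic in $\lambda_1$; the rationalization via $(1-\alpha)(1+\alpha)=(m+1)/Q$ correctly recovers the form $m/(1+\sqrt{1-(m+1)/Q})$. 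The two arguments are of course equivalent (the paper's optimal $t_0$ is precisely the double root of the touching quadratic), but yours is algebraically cleaner: it avoids the calculus of $\beta$, and it obtains $Q\geq m+1$ for free from the requirement that the quadratic in $\lambda_1$ have real roots, rather than from a separate substitution. Your treatment of the leading coefficient (ruling out both the negative and the degenerate linear case to get $2\lambda_1(\Sigma)-m>0$) is also complete.
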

\begin{proof}
First note that, choosing any  $t$  in  \eqref{pol} such that  $2\lambda_1(\Sigma)t+\dfrac{m}{m+1}<0$,  then we trivially have $\lambda_1(\Sigma)> \frac{m}{2}$. 
On the other hand,   by  choosing $t=\frac{-\lambda_1(\Sigma)}{(m+1)(2\lambda_1(\Sigma)-m)}$ in \eqref{pol} we have that
\[
\begin{split}
\frac{Q\lambda^2_1(\Sigma)}{(m+1)^2(2\lambda_1(\Sigma)-m)}&\geq \frac{2\lambda^2_1(\Sigma)}{(m+1)(2\lambda_1(\Sigma)-m)}-\frac{m}{m+1}\\
&= \frac{1}{(m+1)(2\lambda_1(\Sigma)-m)}\left(2\lambda^2_1(\Sigma)-m(2\lambda_1(\Sigma)-m)\right)\\
&=\frac{1}{(m+1)(2\lambda_1(\Sigma)-m)}\left(\lambda^2_1(\Sigma)+(\lambda_1(\Sigma)-m)^2\right)\\&\geq \frac{\lambda^2_1(\Sigma)}{(m+1)(2\lambda_1(\Sigma)-m)}  .
\end{split}\]
 
Hence  $Q\geq m+1$ (see also \cite{barros2004estimates}) and so (\ref{estQ}) is well defined.  

Again from (\ref{pol}) we have
\begin{equation*}
 2\lambda_1(\Sigma)(Qt^2+t)\geq mQt^2-\dfrac{m}{m+1}.
\end{equation*} 
Then 
\begin{equation}\label{E0}
\lambda_1(\Sigma)\geq \frac{m}{2}\max\limits_{t(Qt+1)>0}\beta(t)
\end{equation}
where $\beta(t):=1-\frac{1}{Qt+1}-\frac{1}{(m+1)(Qt^2+t)}$. Note that $\beta'(t)=\frac{Q(m+1)t^2+2Qt+1}{(m+1)t^2(Qt+1)^2}$. We have that at the points where $t(Qt+1)>0$, $\beta'(t)=0$ if and only if
$$t=\frac{-2Q-\sqrt{4Q^2-4Q(m+1)}}{2Q(m+1)}=\dfrac{-1-\sqrt{1-\frac{m+1}{Q}}}{m+1}=:t_0.$$ Note that, in particular, $\beta$ has no critical points in the interval $(0,+\infty)$. It follows that

\begin{equation}\label{t0}
     \max\limits_{t(Qt+1)>0}\beta(t)=\beta(t_0)= \dfrac{Q(m+1)t_0^2-1}{(m+1)t_0(Qt_0+1)}=\frac{-2}{t_0(m+1)}=\frac{2}{1+\sqrt{1-\frac{m+1}{Q}}}.
\end{equation}
The above equality is valid from the fact that $\beta'(t_0)=0$, i.e. $Q(m+1)t_0^2+2Qt_0+1=0$.

Therefore, from (\ref{E0}) and (\ref{t0}) we have (\ref{estQ}).

\end{proof}

Next we prove  Theorem \ref{TP}   assuming Theorem \ref{DIP}.

\vspace{0.2cm}
\begin{flushleft}
\emph{Proof of Theorem \ref{TP}.} 
\end{flushleft}
 
It is well known that when $\Lambda\leq \sqrt{m}$, $\Sigma$ is either a great sphere $\mathbb{S}^n$ or a Clifford torus and so $\lambda_1(\Sigma)=m$.  Therefore, we can now assume that $\Lambda\geq\sqrt{m}$.
 Let $\varphi\in C^{\infty}(\Sigma)$ be the eigenfunction corresponding to the first nonzero eigenvalue $\lambda_1(\Sigma)$ and $u\in C^2_{\varphi}(\overline{\Omega})$ its harmonic extension to $\Omega$ as before. It follows from Theorem \ref{DIP} that $Q< 4(12\Lambda+m+11)^2+1.$  Therefore we have from (\ref{estQ})  that
\begin{equation}\label{l1previo}
  \lambda_1(\Sigma)\geq \dfrac{m}{1+\sqrt{1-(m+1)Q^{-1}}}=\dfrac{m}{2}+m\left(\dfrac{1}{1+\sqrt{1-(m+1)Q^{-1}}}-\dfrac{1}{2}\right).
\end{equation}
On the other hand, since for all $0\leq x<1$, $$\frac{x}{8}<\frac{1}{1+\sqrt{1-x}}-\frac{1}{2},$$    we can consider  $x=(m+1)Q^{-1}$ in (\ref{l1previo})  and deduce from  Theorem \ref{DIP} that
\begin{equation}\label{l1previo1}
  \lambda_1(\Sigma)>\frac{m}{2}+\frac{m(m+1)}{8}Q^{-1}>\frac{m}{2}+\frac{m(m+1)}{32(12\Lambda+m+11)^2+8}.
\end{equation}
The proof is then complete.

\begin{flushright}
$\square$
\end{flushright}

\section{Gradient estimate via an inverse Poincar\'e - type  inequality.}\label{sQ}
 The aim of this section is to prove an
  inverse Poincaré-type inequality in Theorem \ref{DIP}. To do that we  recall first some preliminary results concerning the normal exponential map.

In what follows $N\Sigma:=\{(x,v):\,x\in \Sigma,v\in T_x^{\bot}\Sigma\}$ will denote the normal bundle of $\Sigma$, $U\Sigma:=\{(x,v)\in N\Sigma:\,|v|=1\}$ will denote the normal unit bundle of $\Sigma$ and $\textrm{exp}^{\bot}:N\Sigma\to \mathbb{S}^{m+1}$ defined by $\textrm{exp}^{\bot}(x,v):=\exp_x(v)$ will denote the normal exponential map on $\Sigma$.  Such map is well defined in $\Sigma$ since $\Sigma$ is embedded with compact closure on the sphere $\mathbb{S}^{m+1}$ (see for instance \cite{gray2003tubes} ).  Let $\theta_{\Sigma}:N\Sigma\to\mathbb{R}$ denote the Jacobian determinant of the normal exponential map $\textrm{exp}^{\bot}$. On the other hand, let $\Phi_t:\Sigma\to \mathbb{S}^{m+1}$ be defined by $\Phi_t(x):=\textrm{exp}^{\bot}(x,t\nu(x))$ and $$\Sigma_t:=\Phi_t(\Sigma)=\{\textrm{exp}^{\bot}(x,t\nu(x)):x\in \Sigma\}.$$   From Proposition 2.1 in  \cite{duncan2023improved}, if we define $k_{\textrm{max}}:=\max\limits_{\Sigma,i}|k_i|$ and 
\begin{equation}\label{TSigma}
  T_{\Sigma}:=\arctan(k_{\textrm{max}}^{-1}),
\end{equation}
we have
$$T_{\Sigma}\leq \sup\{t>0|\,\Phi_t:\Sigma\to \Sigma_t\,\textrm{is a diffeomorphism}\}=:t_{\ast}.$$ Similarly, we have $$T_{\Sigma}\leq -\inf\{t<0|\,\Phi_t:\Sigma\to \Sigma_t\,\textrm{is a diffeomorphism}\}:=t^{\ast}.$$

Defining $\textrm{minfoc}(\Sigma):=\min\{t_{\ast},t^{\ast}\}$, we have that
$$T_{\Sigma}\leq \textrm{minfoc}(\Sigma).$$

%
%
%
 The following lemma correspond to Lemma 10.9 in \cite{gray2003tubes}.


\begin{lema}\label{vol}
For all   $0\leq t<\textrm{minfoc}(\Sigma)$
$$\dfrac{d}{dt}\ln\theta_{\Sigma}(x,t\nu(x))\leq \sum\limits_{i=1}^mk_i(x)=0,$$
where $k_{1}(x),\ldots,k_{m}(x)$ are the principal curvatures of $\Sigma$ in $x$ with respect to $\nu(x)$. In addition
\begin{equation}\label{tetauno}
  \theta(t,x):=\theta_{\Sigma}(x,t\nu(x))\leq \theta_{\Sigma}(x,0)=1.
\end{equation}
 
\end{lema}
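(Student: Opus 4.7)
My plan is to express $\theta(t,x)$ explicitly via Jacobi fields along the normal geodesic from $x$ and then reduce the assertion to a one-line calculus fact about $\tan$, with the minimality of $\Sigma$ entering at exactly one step.

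First I would choose at $x\in\Sigma$ an orthonormal frame $e_1,\dots,e_m$ of $T_x\Sigma$ diagonalizing the shape operator, with $A_\nu(e_i)=k_i e_i$. Along the geodesic $\gamma(t)=\exp_x(t\nu(x))$, the variation field $J_i$ of the normal exponential map in the direction $e_i$ is a Jacobi field with $J_i(0)=e_i$ and $J_i'(0)=\nabla_{e_i}\nu=-k_i e_i$. Since $\mathbb{S}^{m+1}$ has constant sectional curvature $1$, the Jacobi equation becomes $J_i''+J_i=0$, which gives
$$J_i(t)=(\cos t-k_i\sin t)\,E_i(t),$$
where $E_i(t)$ is the parallel transport of $e_i$ along $\gamma$. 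Taking determinants yields
$$\theta(t,x)=\prod_{i=1}^m\bigl(\cos t-k_i(x)\sin t\bigr),$$
and on $[0,\textrm{minfoc}(\Sigma))$ each factor is positive because $\Phi_t$ is a diffeomorphism there.

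Next, writing $k_i=\tan\phi_i$ with $\phi_i\in(-\pi/2,\pi/2)$, each factor simplifies as $\cos t-k_i\sin t=\cos(\phi_i+t)/\cos\phi_i$, so
$$\frac{d}{dt}\ln\theta(t,x)=-\sum_{i=1}^m\tan(\phi_i+t).$$
Setting $f(t):=\sum_i\tan(\phi_i+t)$, the key observation is that $f'(t)=\sum_i\sec^2(\phi_i+t)>0$ and $f(0)=\sum_i k_i(x)=0$ by the minimality of $\Sigma$. Hence $f(t)\ge 0$ for $t\ge 0$, yielding
$$\frac{d}{dt}\ln\theta(t,x)\le 0=\sum_{i=1}^m k_i(x),$$
which is the first assertion. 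Integrating from $0$ to $t$ and using $\theta(0,x)=1$ then gives $\theta(t,x)\le 1$, the second assertion.

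The essentially only non-routine step is the Jacobi field computation for $\theta$ in the first step. One must track sign conventions for $\nu$ and $A_\nu$ carefully so that the formula $\theta=\prod(\cos t-k_i\sin t)$ comes out correctly (rather than with $+k_i\sin t$), and one should check that $\phi_i+t$ remains in $(-\pi/2,\pi/2)$ throughout, which is automatic from $t<T_\Sigma=\arctan(1/k_{\max})$ and $|\phi_i|\le\arctan k_{\max}$. Everything afterwards is one line of calculus.
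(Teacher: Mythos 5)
Your proof is correct, and it is genuinely a proof where the paper offers none: the authors simply cite Lemma 10.9 of Gray's \emph{Tubes}, whose argument runs through the Riccati comparison equation for the shape operators of the parallel hypersurfaces $\Sigma_t$ and works in any ambient space with a suitable curvature bound. You instead exploit the constant curvature of $\mathbb{S}^{m+1}$ to solve the Jacobi equation in closed form and obtain the explicit factorization $\theta(t,x)=\prod_i(\cos t-k_i(x)\sin t)$, after which the monotonicity of $-\frac{d}{dt}\ln\theta=\sum_i\tan(\phi_i+t)$ together with minimality ($f(0)=\sum_i k_i=0$) finishes everything in one line. What this buys is a self-contained, elementary argument consistent with the rest of the paper --- indeed the same formula $d(\Phi_t)_xe_i=(\cos t-k_i\sin t)E_i(t)$ is derived independently in the proof of Lemma \ref{EG2} --- at the cost of generality (Gray's version does not need constant curvature). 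One small repair: the lemma is asserted for all $0\le t<\textrm{minfoc}(\Sigma)$, whereas your closing remark justifies $\phi_i+t\in(-\pi/2,\pi/2)$ only for $t<T_\Sigma$, which may be a strictly smaller interval. The justification you actually need is the one you already gave earlier: on $[0,\textrm{minfoc}(\Sigma))$ no factor $\cos t-k_i\sin t=\cos(\phi_i+t)/\cos\phi_i$ can vanish (else $\theta=0$, contradicting that $\Phi_t$ is a diffeomorphism), so by continuity each factor stays positive and each $\phi_i+t$ stays in $(-\pi/2,\pi/2)$ on the whole interval; lean on that rather than on the bound $t<T_\Sigma$.
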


 The proof of the following Lemma is straightforward from Lemma 8.1 in \cite{gray2003tubes} and the definition of $\theta_{\Sigma}$.
\begin{lema}
 For all $0<b<\textrm{minfoc}(\Sigma)$, the application $\Phi:[0,b]\times \Sigma\to \Phi([0,b]\times \Sigma)$ defined by $\Phi(t,x):=\Phi_t(x)$ is a diffeomorphism. In particular for any continuous function  $F$ over $\Phi([0,b]\times \Sigma)$
we have
   \begin{equation}\label{INT}
     \int_{\Phi([0,b]\times \Sigma)}F(y)dV=\int\limits_0^b\int\limits_{\Sigma}F(\Phi(t,x))\theta(t,x)dSdt,
   \end{equation}
where $\theta(t,x)=\theta_{\Sigma}(x,t\nu(x))$.
\end{lema}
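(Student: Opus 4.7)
The plan is to reduce this to a direct application of the change of variables formula, once the diffeomorphism property is justified. I will break the argument into two logical pieces: (i) establishing that $\Phi$ is a diffeomorphism onto its image, and (ii) computing the Jacobian determinant of $\Phi$ so that the volume form transforms as stated.

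For (i), my starting point is the definition of $\mathrm{minfoc}(\Sigma)$: for each fixed $t \in [0,b]$ the map $\Phi_t \colon \Sigma \to \Sigma_t$ is a diffeomorphism, since $b < \mathrm{minfoc}(\Sigma) \le t_\ast$. The upgrade from a family of diffeomorphisms $\{\Phi_t\}$ to a single diffeomorphism $\Phi$ on the cylinder $[0,b]\times\Sigma$ requires global injectivity. I would invoke the fact that for $y = \Phi(t,x)$ with $0 \le t < \mathrm{minfoc}(\Sigma)$, the minimizing geodesic from $y$ to $\Sigma$ is realized uniquely by the normal geodesic through $(x, \nu(x))$ at time $t$, so $t = \mathrm{dist}(y,\Sigma)$ and $x$ is the unique foot of the perpendicular. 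This is precisely the content of Lemma 8.1 of \cite{gray2003tubes}, which the authors cite, so injectivity and the smooth inverse come for free. Smoothness of $\Phi$ itself is immediate from smoothness of $\exp^\bot$ and $\nu$.

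For (ii), I would pass to Fermi coordinates adapted to $\Sigma$. Writing the metric in a neighborhood of $\Sigma$ in the form $g = dt^2 + g_t$, where $g_t$ is the induced metric on $\Sigma_t$, the volume element pulled back by $\Phi$ factors as $\Phi^\ast dV = J(t,x)\, dS\, dt$, where $J(t,x) = \sqrt{\det g_t / \det g_0}$ is exactly the Jacobian determinant of $\exp^\bot$ at $(x, t\nu(x))$, i.e.\ $\theta(t,x)$ by definition. Substituting into the ordinary change of variables formula
\[
\int_{\Phi([0,b]\times\Sigma)} F(y)\, dV = \int_{[0,b]\times\Sigma} F(\Phi(t,x))\, \Phi^\ast dV
\]
and using Fubini gives \eqref{INT}.

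The main obstacle, if any, is purely bookkeeping: making sure that the Jacobian produced by the polar/Fermi decomposition of $dV$ on $\mathbb{S}^{m+1}$ coincides with $\theta_\Sigma$ as the authors have defined it (Jacobian of $\exp^\bot$ on the total space of $N\Sigma$ restricted to the section $(x, t\nu(x))$). Since $N\Sigma$ has an orthonormal trivialization along $\Sigma$ given by $\nu$, and the fibre coordinate is just $t$, this identification is formal. Everything else is a direct quotation of the results the paper has already cited from \cite{gray2003tubes}, so no substantive new computation is needed.
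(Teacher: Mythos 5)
Your proposal is correct and follows essentially the same route as the paper, which simply declares the lemma ``straightforward from Lemma 8.1 in \cite{gray2003tubes} and the definition of $\theta_{\Sigma}$''; you have merely expanded that citation into its two constituent steps (global injectivity of $\Phi$ via the unique realization of the distance to $\Sigma$, and the change-of-variables formula with Jacobian $\theta_\Sigma$). No gap.
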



%
%
%
%
\begin{lema}\label{CP}
  For each $0\leq t<T_{\Sigma}$ and $x\in \Sigma$  we have that
  $$\cos t-k_i(x)\,\sin \,t= \dfrac{ \sin \,(\theta_i(x)-t)}{\sin\,\theta_i(x)} \ge \dfrac{\sin \,(T_{\Sigma}-t)}{\sin\,T_{\Sigma}}>0 ,$$
  where $\cot \theta_i(x)=k_i(x)$.
\end{lema}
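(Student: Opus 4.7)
The plan is to verify the equality first, then establish the inequality by proving two facts: that $\theta_i(x)$ lies in a favorable sub-interval of $(0,\pi)$, and that the function $\theta \mapsto \sin(\theta-t)/\sin\theta$ is monotone in $\theta$.

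First, I would verify the identity $\cos t - k_i(x)\sin t = \sin(\theta_i(x)-t)/\sin\theta_i(x)$ by a direct computation: substituting $k_i(x) = \cos\theta_i(x)/\sin\theta_i(x)$ and clearing the denominator gives $\cos t\sin\theta_i - \sin t\cos\theta_i$, which by the sine subtraction formula equals $\sin(\theta_i - t)$. This is valid provided $\theta_i(x) \in (0,\pi)$, which I would take as the branch defining $\theta_i(x)$ from the bijection $\cot\colon(0,\pi)\to\mathbb{R}$.

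Next I would use the bound $|k_i(x)|\le k_{\textrm{max}}$ together with the definition $T_\Sigma=\arctan(k_{\textrm{max}}^{-1})\in(0,\pi/2)$. Since $\cot T_\Sigma = k_{\textrm{max}}$ and $\cot(\pi-T_\Sigma)=-k_{\textrm{max}}$, and $\cot$ is strictly decreasing on $(0,\pi)$, the inequality $-k_{\textrm{max}}\le k_i(x)\le k_{\textrm{max}}$ translates into
\[
T_\Sigma \le \theta_i(x) \le \pi - T_\Sigma.
\]
In particular $\theta_i(x)\in(0,\pi)$, so the identity from Step 1 is legitimate and $\sin\theta_i(x)>0$.

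For the inequality itself, I would rewrite
\[
\frac{\sin(\theta-t)}{\sin\theta} = \cos t - \cot\theta\,\sin t
\]
and observe that, for $0<t<T_\Sigma<\pi/2$, we have $\sin t>0$ while $\theta\mapsto -\cot\theta$ is strictly increasing on $(0,\pi)$. Hence the right-hand side is a non-decreasing function of $\theta$ on $(0,\pi)$ (trivially equal to $1$ when $t=0$). Applying this monotonicity to $\theta = \theta_i(x)\ge T_\Sigma$ yields
\[
\frac{\sin(\theta_i(x)-t)}{\sin\theta_i(x)}\ge \frac{\sin(T_\Sigma-t)}{\sin T_\Sigma}.
\]
Finally, since $0\le t<T_\Sigma<\pi/2$, both $\sin(T_\Sigma-t)$ and $\sin T_\Sigma$ are strictly positive, which gives the last inequality in the statement.

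I do not foresee any serious obstacle: once the correct branch $\theta_i(x)\in(0,\pi)$ is fixed, the whole argument reduces to monotonicity of $\cot$. The only point requiring a small amount of care is handling the sign of $k_i(x)$ (relevant because $\Sigma$ is minimal, so principal curvatures of both signs occur), which is precisely what the two-sided bound $T_\Sigma\le\theta_i(x)\le\pi-T_\Sigma$ takes care of.
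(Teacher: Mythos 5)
Your argument is correct and is essentially the paper's proof: both reduce to the bound $k_i(x)\le k_{\textrm{max}}=\cot T_\Sigma$ together with $\sin t\ge 0$, which gives $\cos t-k_i(x)\sin t\ge \cos t-\cot T_\Sigma\,\sin t=\sin(T_\Sigma-t)/\sin T_\Sigma>0$; your "monotonicity in $\theta$" step is exactly this inequality restated via $\theta_i(x)\ge T_\Sigma$. The extra care you take with the branch $\theta_i(x)\in(0,\pi)$ and the explicit verification of the first equality are fine but do not change the substance.
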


\begin{proof}
  By  the definition of $T_{\Sigma}$  in \eqref{TSigma}  we have  that  $k_i(x)\leq |k_i(x)|\leq k_{\textrm{max}}=\cot(T_{\Sigma}).$ Therefore
  $$\cos t-k_i(x)\,\textrm{sin}\,t\geq \cos t-\cot T_{\Sigma}\,\sin \,t=\dfrac{\sin \,(T_{\Sigma}-t)}{\sin \,T_{\Sigma}}>0,$$
where we have used that $0<T_{\Sigma}-t\leq T_{\Sigma}<\pi/2$.
\end{proof}

Given  $\varphi\in C^{\infty}(\Sigma)$, we define $\widetilde{\varphi}(\Phi_t(x)):=\varphi(x)$. The function $\widetilde{\varphi}$ is called \textit{the normal extension of} $\varphi$ and it is well defined in the set
$$\{y\in \mathbb{S}^{m+1}:y=\Phi_t(x),\,x\in \Sigma,\,|t|<T_{\Sigma}\}.$$

\begin{lema}\label{EG2}
 For each $0\leq t<T_{\Sigma}$ and $x\in \Sigma$ we have
$$|\nabla^{T}\widetilde{\varphi}|^2(\Phi_t(x))\leq \dfrac{\sin ^2\,T_{\Sigma}}{\sin ^2\,(T_{\Sigma}-t)}|\nabla\varphi|^2(x),$$ where $\widetilde{\varphi}$ is the normal extension of $\varphi$ into $\Omega$, $\nabla^{T}\widetilde{\varphi}(\Phi_t(x))$ denotes the gradient of $\widetilde{\varphi}|_{\Sigma_t}$ in $y=\Phi_t(x)$ and $\Phi_t(x)=\Phi(t,x)$.
\end{lema}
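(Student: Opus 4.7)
The plan is to pull back the computation of $\nabla^T\widetilde{\varphi}$ on $\Sigma_t$ to $\Sigma$ via $\Phi_t$ and then use Lemma \ref{CP} to control the pullback distortion in each principal direction.

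First I would set up an adapted frame. Fix $x\in\Sigma$ and choose an orthonormal basis $\{e_1,\ldots,e_m\}$ of $T_x\Sigma$ consisting of principal directions of $A_\nu$, so that $A_\nu e_i=k_i(x) e_i$. Since geodesics in $\mathbb{S}^{m+1}$ are great circles, the explicit formula
\[
\Phi_t(x)=\cos(t)\, x+\sin(t)\,\nu(x)
\]
holds in the ambient $\mathbb{R}^{m+2}$. Differentiating in the direction $e_i$ and using the Weingarten equation $d\nu(e_i)=-k_i(x)e_i$ gives
\[
d\Phi_t(e_i)=\bigl(\cos t-k_i(x)\sin t\bigr)\,\overline{e_i},
\]
where $\overline{e_i}$ is the parallel transport of $e_i$ along the geodesic $s\mapsto\Phi_s(x)$. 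Since parallel transport is an isometry, the family $\{\overline{e_1},\ldots,\overline{e_m}\}$ is orthonormal in $T_{\Phi_t(x)}\mathbb{S}^{m+1}$ and, being the image of $d\Phi_t$ (an isomorphism by the hypothesis $t<T_\Sigma\le\textrm{minfoc}(\Sigma)$), it is a basis of $T_{\Phi_t(x)}\Sigma_t$.

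Next I would exploit the defining relation $\widetilde{\varphi}\circ\Phi_t=\varphi$ on $\Sigma$. Applying $d\widetilde{\varphi}$ to $d\Phi_t(e_i)$ and using the chain rule yields
\[
\bigl(\cos t-k_i(x)\sin t\bigr)\,\overline{e_i}(\widetilde{\varphi})=e_i(\varphi),
\]
and hence
\[
\overline{e_i}(\widetilde{\varphi})\bigl(\Phi_t(x)\bigr)=\frac{e_i(\varphi)(x)}{\cos t-k_i(x)\sin t}.
\]
Since $\{\overline{e_i}\}$ is an orthonormal basis of $T_{\Phi_t(x)}\Sigma_t$, expanding $\nabla^T\widetilde{\varphi}$ in this basis gives
\[
|\nabla^T\widetilde{\varphi}|^2\bigl(\Phi_t(x)\bigr)=\sum_{i=1}^m \frac{\bigl(e_i(\varphi)(x)\bigr)^2}{\bigl(\cos t-k_i(x)\sin t\bigr)^2}.
\]

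Finally, Lemma \ref{CP} provides, for each $i$ and each $0\le t<T_\Sigma$, the uniform lower bound
\[
\cos t-k_i(x)\sin t\ge \frac{\sin(T_\Sigma-t)}{\sin T_\Sigma}>0,
\]
so that $\bigl(\cos t-k_i(x)\sin t\bigr)^{-2}\le \sin^2 T_\Sigma/\sin^2(T_\Sigma-t)$. Pulling this factor out of the sum and using $\sum_i(e_i(\varphi))^2=|\nabla\varphi|^2(x)$ gives exactly
\[
|\nabla^T\widetilde{\varphi}|^2\bigl(\Phi_t(x)\bigr)\le\frac{\sin^2 T_\Sigma}{\sin^2(T_\Sigma-t)}\,|\nabla\varphi|^2(x),
\]
as claimed. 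No serious obstacle is expected; the only delicate point is keeping track of two distinct bases (principal directions at $x$ versus their parallel transports at $\Phi_t(x)$), so that the scaling factors $\cos t-k_i\sin t$ appear cleanly and Lemma \ref{CP} can be applied termwise.
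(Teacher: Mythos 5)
Your proposal is correct and follows essentially the same route as the paper: principal frame at $x$, parallel transport along the normal geodesic giving $d\Phi_t(e_i)=(\cos t-k_i\sin t)E_i(t)$, the chain rule applied to $\widetilde{\varphi}\circ\Phi_t=\varphi$ to obtain $|\nabla^{T}\widetilde{\varphi}|^2(\Phi_t(x))=\sum_i\langle\nabla\varphi(x),e_i\rangle^2/(\cos t-k_i(x)\sin t)^2$, and then Lemma \ref{CP} applied termwise. The only cosmetic difference is that you compute $d\Phi_t(e_i)$ from the explicit ambient formula $\Phi_t(x)=\cos(t)x+\sin(t)\nu(x)$ rather than the paper's intrinsic Jacobi-field expression, which is equally valid here.
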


\begin{proof}
  For $x\in \Sigma$, let $\{E_i(t):=P_{t}e_i\}$ be an orthonormal basis of $T_{\Phi_t(x)}\Sigma_t$ where $\{e_i\}$ is an orthonormal basis of $T_x\Sigma$ such that $A_{\nu(x)}e_i=k_i(x)e_i$ and $P_{t}e_i$ the parallel transport of $e_i$ along the geodesic $\gamma_x(t):=\Phi_t(x)$ from $\gamma_x(0)=x$ to $\gamma_x(t)=\Phi_t(x)$. It follows that

  \begin{equation}\label{dFi}
\begin{array}{rcl}
  d(\Phi_t)_xe_i &= & P_{t}((\cos t)\,e_i+(\sin \,t)\,\nu'(x)e_i) \\
    &=&P_{t}((\cos t)\,e_i-(\sin \,t)\,A_{\nu(x)}e_i)\\
   &= &(\cos t-k_i(x)\sin \,t)E_i(t).
\end{array}
  \end{equation}

 We have from \eqref{dFi} that
   $$\begin{array}{rcl}
     |\nabla^{T}\widetilde{\varphi}|^2(\Phi_t(x)) =\sum\limits_{i}\langle\nabla^{T}\widetilde{\varphi}(\Phi_t(x)),E_i(t)\rangle^2  & = &  \sum\limits_{i}\langle\nabla^{T}\widetilde{\varphi}(\Phi_t(x)),\dfrac{d(\Phi_t)_xe_i}{\cos t-k_i(x)\sin \,t}\rangle^2 \\
     &&\\
     &=&\sum\limits_{i}\dfrac{\langle\nabla^{T}\widetilde{\varphi}(\Phi(t,x)),d\Phi_{(t,x)}(0,e_i)\rangle^2}{(\cos t-k_i(x)\sin \,t)^2}\\
     &&\\
     &=&\sum\limits_{i}\dfrac{(d\left(\widetilde{\varphi}|_{\Sigma_t}\circ\Phi\right)_{(t,x)}(0,e_i))^2}{(\cos t-k_i(x)\sin \,t)^2}\\
     &&\\
     &=& \sum\limits_{i}\dfrac{\langle(0,\nabla\varphi(x)),(0,e_i)\rangle^2}{(\cos t-k_i(x)\sin \,t)^2}.
     \end{array}$$
And so
 \begin{equation}\label{gradextnor}
   |\nabla^{T}\widetilde{\varphi}|^2(\Phi_t(x))=\sum\limits_{i}\dfrac{\langle\nabla\varphi(x),e_{i}\rangle^2}{(\cos t-k_i(x)\sin \,t)^2}.
 \end{equation}
     Using Lemma \ref{CP}  in formula  (\ref{gradextnor}), the proof of the lemma is concluded.
\end{proof}

 Next we are going to construct a transition function wich will be a key technical  tool in the proof of our main result. For any $a<b$ we define
\begin{equation}\label{fi}
  \psi_{a,b}(t):=1-g\left(\dfrac{t^2-a^2}{b^2-a^2}\right),
\end{equation}
where the function  $g:\mathbb{R}\to [0,1]$ is defined by $$g(t):=\dfrac{f(t)}{f(t)+f(1-t)},\qquad f(t):=\left\{
                                                 \begin{array}{ll}
                                                   e^{-1/t}, & t>0 \\
                                                   0, & t\leq 0
                                                 \end{array}
                                               \right. .
$$

It follows that
\begin{enumerate}
  \item $g(t)\geq 0$    $\forall  t\in\mathbb{R}.$
  \item $g(t)=0$ $\forall t\in(-\infty,0].$
  \item $\lim\limits_{t\to 0^+}g(t)=0$ and $\lim\limits_{t\to 1^-}g(t)=1$.
  \item The function  $g'(t)=\dfrac{e^{\frac{1}{t(1-t)}}(1-2t+2t^2)}{(e^{\frac{1}{t-t}}+e^{\frac{1}{t}})^2(t-1)^2t^2},$
is such that $0<g'(t)\leq 2$, for all $t\in(0,1)$ and $\lim\limits_{t\to 0^+}g'(t)=\lim\limits_{t\to 1^-}g'(t)=0.$
\end{enumerate}

\begin{figure*}[h]
\begin{center}
\includegraphics[height=4cm,width=6cm]{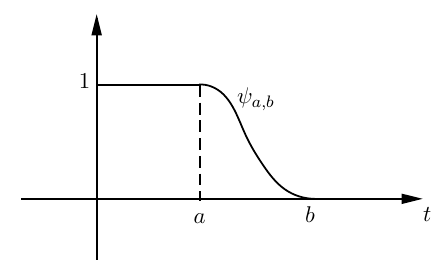}
\caption{Graph of $\psi_{a,b}$.}
\end{center}
\end{figure*}

For $ t\in[a,b],$ it follows  from \eqref{fi}  that
$$|\psi_{a,b}'(t)|=g'\left(\dfrac{t^2-a^2}{b^2-a^2}\right)\dfrac{2t}{b^2-a^2}\leq 2\left(\dfrac{2t}{b^2-a^2}\right)=\dfrac{4t}{b^2-a^2}.$$
Therefore
$$\displaystyle\int_{a}^{b}(\psi_{a,b}'(t))^2dt\leq\frac{16}{(b^2-a^2)^2}\displaystyle\int_{a}^{b}t^2dt=\dfrac{16(b^3-a^3)}{3(b^2-a^2)^2}.$$

 In particular, if we denote $\overline{\psi}_{\rho,c}:=\psi_{a,b}$ for the special choices $a=\frac{b}{c}$  for some $c>1$, and $b=\rho T_{\Sigma}$  for some $0<\rho<1$, we have that

\begin{equation}\label{Test}
  \displaystyle\int_{\frac{\rho T_{\Sigma}}{c}}^{\rho T_{\Sigma}}(\overline{\psi}_{\rho,c}'(t))^2dt\leq\dfrac{16}{3\rho T_{\Sigma}}\dfrac{c(c^3-1)}{(c^2-1)^2}.
\end{equation}

\begin{lema}\label{Gradv}
For  any  $0<\rho<1$ and $c>1$, the function $v_{\rho,c}:\overline{\Omega}\to \mathbb{R}$ defined by
\begin{equation}\label{vro}
  v_{\rho,c}(\Phi_t(x))= \overline{\psi}_{\rho,c}(t) \varphi(x)
\end{equation}
 satisfies
\begin{equation}\label{GradTest}
  \displaystyle\int_{\Omega}|\overline{\nabla} v_{\rho,c}|^2dV=\displaystyle\int_{0}^{\rho T_{\Sigma}}\int_{\Sigma}(\overline{\psi}_{\rho,c}'(t))^2\varphi^2(x)\theta(t,x)dS\,dt+\displaystyle\int_{0}^{\rho T_{\Sigma}}\int_{\Sigma}(\overline{\psi}_{\rho,c}(t))^2|\nabla^{T}\widetilde{\varphi}|^2(\Phi_t(x))\theta(t,x)dS\,dt.
\end{equation}
\end{lema}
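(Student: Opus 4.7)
The plan is to exploit the fact that $v_{\rho,c}$ is compactly supported inside the tubular neighborhood $\Phi([0,\rho T_{\Sigma}]\times \Sigma)$. Indeed, the cutoff $\overline{\psi}_{\rho,c}$ is built so that $\overline{\psi}_{\rho,c}(t)=0$ for $t\geq \rho T_{\Sigma}$ (since $g(1)=1$), so we may extend $v_{\rho,c}$ by $0$ on $\Omega\setminus \Phi([0,\rho T_{\Sigma}]\times \Sigma)$. Consequently,
\[
\int_{\Omega}|\overline{\nabla} v_{\rho,c}|^2\,dV=\int_{\Phi([0,\rho T_{\Sigma}]\times\Sigma)}|\overline{\nabla} v_{\rho,c}|^2\,dV,
\]
and the change-of-variables formula \eqref{INT} converts the right-hand side into an integral over $[0,\rho T_{\Sigma}]\times\Sigma$ with density $\theta(t,x)$.

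The core step is then to decompose $\overline{\nabla} v_{\rho,c}$ at a point $\Phi_t(x)$ into its components normal and tangent to the parallel hypersurface $\Sigma_t$, using the orthogonal splitting
\[
T_{\Phi_t(x)}\mathbb{S}^{m+1}=T_{\Phi_t(x)}\Sigma_t\oplus \mathbb{R}\,\gamma_x'(t),
\]
where $\gamma_x(s):=\Phi_s(x)$ is the unit-speed geodesic orthogonal to the $\Sigma_s$. Since $\gamma_x$ is a unit-speed geodesic, the normal component of the gradient is just the $t$-derivative of $v_{\rho,c}\circ\Phi$, giving
\[
\langle\overline{\nabla} v_{\rho,c},\gamma_x'(t)\rangle=\frac{d}{dt}\bigl(\overline{\psi}_{\rho,c}(t)\varphi(x)\bigr)=\overline{\psi}_{\rho,c}'(t)\,\varphi(x).
\]
For the tangential component, I would observe that the restriction $v_{\rho,c}|_{\Sigma_t}$ equals the constant $\overline{\psi}_{\rho,c}(t)$ times $\widetilde{\varphi}|_{\Sigma_t}$, because by definition $v_{\rho,c}(\Phi_t(x))=\overline{\psi}_{\rho,c}(t)\varphi(x)=\overline{\psi}_{\rho,c}(t)\widetilde{\varphi}(\Phi_t(x))$; hence
\[
\nabla^T v_{\rho,c}(\Phi_t(x))=\overline{\psi}_{\rho,c}(t)\,\nabla^T\widetilde{\varphi}(\Phi_t(x)).
\]

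Combining these two via the Pythagorean theorem yields pointwise
\[
|\overline{\nabla} v_{\rho,c}|^2(\Phi_t(x))=\bigl(\overline{\psi}_{\rho,c}'(t)\bigr)^2\varphi^2(x)+\bigl(\overline{\psi}_{\rho,c}(t)\bigr)^2|\nabla^T\widetilde{\varphi}|^2(\Phi_t(x)),
\]
and substituting this into \eqref{INT} gives \eqref{GradTest}. The only delicate point is the justification of the orthogonal decomposition: the Gauss lemma guarantees that the radial geodesic $\gamma_x'(t)$ is perpendicular to $\Sigma_t$ for $t<\mathrm{minfoc}(\Sigma)$, and since $\rho T_{\Sigma}<T_{\Sigma}\le \mathrm{minfoc}(\Sigma)$ the hypersurfaces $\Sigma_t$ foliate the tube smoothly, so the splitting is valid throughout the region of integration. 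This is the only place where the assumption $\rho<1$ (hence $\rho T_{\Sigma}<T_{\Sigma}$) is really used, and it is what makes the computation a clean Fubini-type identity rather than requiring further estimates.
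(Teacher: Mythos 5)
Your proof is correct and follows essentially the same route as the paper's: both rest on the orthogonal splitting of $\overline{\nabla} v_{\rho,c}$ at $\Phi_t(x)$ into its component along $\gamma_x'(t)$ and its component tangent to $\Sigma_t$, followed by the change of variables \eqref{INT}. Your shortcut for the tangential part---observing that $v_{\rho,c}|_{\Sigma_t}=\overline{\psi}_{\rho,c}(t)\,\widetilde{\varphi}|_{\Sigma_t}$, so its intrinsic gradient is $\overline{\psi}_{\rho,c}(t)\nabla^{T}\widetilde{\varphi}$---is a clean replacement for the paper's explicit computation via $d(\Phi_t)_xe_i=(\cos t-k_i(x)\sin t)E_i(t)$, and your explicit remark that $\overline{\psi}_{\rho,c}$ vanishes for $t\ge\rho T_{\Sigma}$ (so $v_{\rho,c}$ extends by zero to all of $\overline{\Omega}$) makes precise a point the paper leaves implicit.
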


\begin{proof}
Let $\overline{v}_{\rho,c}:[0,\rho T_{\Sigma}]\times \Sigma\to \mathbb{R}$ be the function defined by $\overline{v}_{\rho,c}(t,x):= \overline{\psi}_{\rho,c}(t)\varphi(x)$. It follows that $\overline{v}_{\rho,c}=v_{\rho,c}\circ \Phi$ and
\begin{equation}\label{Gravtilda}
  \overline{\nabla}\overline{v}_{\rho,c}(t,x)=( \overline{\psi}_{\rho,c}'(t)\varphi(x), \overline{\psi}_{\rho,c}(t)\nabla \varphi(x)).
\end{equation}

For $t\in[0,\rho T_{\Sigma}]$ and $x\in \Sigma$, consider $\{E_i(t)\}$ be as in the proof of Lemma \ref{EG2}. Then

$$\begin{array}{rcl}
   |\overline{\nabla} v_{\rho,c}|^2(\Phi_t(x))  & = & \sum\limits_{i}\langle\overline{\nabla} v_{\rho,c}(\Phi_t(x)),E_i(t)\rangle^2+\langle\overline{\nabla} v_{\rho,c}(\Phi_t(x)),\gamma_x'(t)\rangle^2 \\
     &  &  \\
     & = & \sum\limits_{i}\langle\overline{\nabla} v_{\rho,c}(\Phi(t,x)),\dfrac{d\Phi_{(t,x)}(0,e_i)}{\cos t-k_i(x)\sin \,t}\rangle^2+\langle\overline{\nabla} v_{\rho,c}(\Phi(t,x)),d\Phi_{(t,x)}(1,0)\rangle^2,
\end{array} $$
where the last equality is a consequence of (\ref{dFi}). It follows that

$$\begin{array}{rcl}
   |\overline{\nabla} v_{\rho,c}|^2(\Phi_t(x))  & = & \sum\limits_{i}\dfrac{\langle\overline{\nabla} v_{\rho,c}(\Phi(t,x)),d\Phi_{(t,x)}(0,e_i)}{(\cos t-k_i(x)\sin \,t)^2}\rangle^2+\langle\overline{\nabla} v_{\rho,c}(\Phi(t,x)),d\Phi_{(t,x)}(1,0)\rangle^2\\
&&\\
&=&\sum\limits_{i}\dfrac{\left(d(v_{\rho,c}\circ \Phi)_{(t,x)}(0,e_i)\right)^2}{(\cos t-k_i(x)\sin \,t)^2}+\left(d(v_{\rho,c}\circ \Phi)_{(t,x)}(1,0)\right)^2\\
&&\\
&=&\sum\limits_{i}\dfrac{\langle\overline{\nabla}\overline{v}_{\rho,c}(t,x),(0,e_i)\rangle^2}{(\cos t-k_i(x)\sin \,t)^2}+\langle \overline{\nabla}\overline{v}_{\rho,c}(t,x),(1,0)\rangle^2.
\end{array}$$
From (\ref{Gravtilda}) we have
\begin{equation}\label{gradprevio}
  \begin{array}{rcl}
 |\overline{\nabla} v_{\rho,c}|^2(\Phi_t(x))&=&\sum\limits_{i}\dfrac{\left(\overline{\psi}_{\rho,c}(t)\langle\nabla\varphi(x),e_i\rangle\right)^2}{(\cos t-k_i(x)\sin \,t)^2}+\left(\overline{\psi}_{\rho,c}'(t)\varphi(x)\right)^2\\
&&\\
&=&(\overline{\psi}_{\rho,c}(t))^2\sum\limits_{i}\dfrac{\langle\nabla\varphi(x),e_i\rangle^2}{(\cos t-k_i(x)\sin \,t)^2}+(\overline{\psi}_{\rho,c}'(t))^2\varphi^2(x)\\
&&\\
&=&(\overline{\psi}_{\rho,c}(t))^2|\nabla^{T}\widetilde{\varphi}|^2(\Phi_t(x))+(\overline{\psi}_{\rho,c}'(t))^2\varphi^2(x)
  \end{array}
\end{equation}

where the last equality is a consequence of (\ref{gradextnor}). On the other hand, from (\ref{INT}) we have
\begin{equation}\label{EE}
 \displaystyle\int_{\Omega}|\overline{\nabla} v_{\rho,c}|^2dV=\displaystyle\int_{0}^{\rho T_{\Sigma}}\int_{\Sigma}|\overline{\nabla} v_{\rho,c}|^2(\Phi_t(x))\theta(t,x)\,dS\,dt.
\end{equation}

  We conclude the proof of the lemma by replacing (\ref{gradprevio}) into (\ref{EE}).
\end{proof}

In order to  get  an upper estimate for $|\overline{\nabla} u|_{L^2(\Omega)}^2$, we will use the fact that the harmonic extension $u$ minimizes the Dirichlet energy in $C^{\infty}_{\varphi}(\Sigma)$.

\begin{lema}\label{EG1}
Let $u$ be the harmonic extension of $\varphi\in C^{\infty}(\Sigma)$. For all $v\in C^\infty_{\varphi}(\overline{\Omega})$ we have
  $$\displaystyle\int_{\Omega}|\overline{\nabla} u|^2dV\leq\displaystyle\int_{\Omega}|\overline{\nabla} v|^2dV.$$
\end{lema}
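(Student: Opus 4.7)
The plan is to use the classical Dirichlet principle: write any competitor $v\in C^\infty_\varphi(\overline{\Omega})$ as a perturbation of the harmonic extension $u$ by a function that vanishes on the boundary, and show that the cross term in the expansion of the energy vanishes by Stokes' formula.

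More concretely, set $w:=v-u$. Since both $v$ and $u$ equal $\varphi$ on $\Sigma=\partial\Omega$, we have $w|_{\Sigma}=0$. Then expand
\[
\int_\Omega |\overline{\nabla} v|^2\,dV=\int_\Omega |\overline{\nabla} u|^2\,dV+2\int_\Omega\langle\overline{\nabla} u,\overline{\nabla} w\rangle\,dV+\int_\Omega |\overline{\nabla} w|^2\,dV.
\]
For the cross term I would apply Stokes' formula exactly as in \eqref{green1}:
\[
\int_\Omega\langle\overline{\nabla} u,\overline{\nabla} w\rangle\,dV=-\int_\Omega w\,\overline{\Delta} u\,dV+\int_\Sigma w\,\frac{\partial u}{\partial\nu}\,dS.
\]
The volume integral on the right vanishes because $\overline{\Delta}u=0$ in $\Omega$ by \eqref{extarmo}, and the boundary integral vanishes because $w\equiv 0$ on $\Sigma$. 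Therefore the cross term is zero.

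Consequently
\[
\int_\Omega |\overline{\nabla} v|^2\,dV=\int_\Omega |\overline{\nabla} u|^2\,dV+\int_\Omega |\overline{\nabla} w|^2\,dV\geq \int_\Omega |\overline{\nabla} u|^2\,dV,
\]
which is the stated inequality. There is no real obstacle here; the only point to be careful about is that $w\in C^\infty(\overline{\Omega})$ with $w|_{\Sigma}=0$ has enough regularity for Stokes' formula to apply on $\Omega$, which is immediate since $v,u\in C^2(\overline{\Omega})$ and $\Sigma$ is smooth.
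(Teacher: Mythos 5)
Your proof is correct and is essentially the same as the paper's: the paper expands $\int_{\Omega}|\overline{\nabla}(u-v)|^2\,dV\ge 0$ and uses Stokes' formula together with $u|_{\Sigma}=v|_{\Sigma}=\varphi$ and $\overline{\triangle}u=0$ to evaluate the cross term, which is exactly your computation written with $w=v-u$.
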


\begin{proof}
Since $u$ is harmonic, by Stokes' theorem   
  $$\displaystyle\int_{\Omega}\langle\overline{\nabla} v,\overline{\nabla} u\rangle dV=\int_{\Sigma}v\frac{\partial u}{\partial\nu}dS=\int_{\Sigma}u\frac{\partial u}{\partial\nu}dS=\int_{\Omega}|\overline{\nabla} u|^2dV.$$ And so
  $$\begin{array}{rcl}
    0 & \leq & \displaystyle\int_{\Omega}|\overline{\nabla} (u-v)|^2dV \\
     &  &  \\
     & = & \displaystyle\int_{\Omega}|\overline{\nabla} u|^2dV+\int_{\Omega}|\overline{\nabla} v|^2dV-2\int_{\Omega}\langle \overline{\nabla} u,\overline{\nabla} v\rangle dV \\
     &  &  \\
     & = &\displaystyle\int_{\Omega}|\overline{\nabla} u|^2dV+\int_{\Omega}|\overline{\nabla} v|^2dV-2\int_{\Omega}|\overline{\nabla} u|^2dV,
  \end{array}$$ and so we are done.
 
\end{proof}

 \begin{prop}\label{EG}
Let $u$ be the harmonic extension of $\varphi\in C^{\infty}(\Sigma)$. Then, if $\varphi$ is a first  eigenfunction of the laplacian satisfying $\displaystyle\int\limits_{\Sigma}\varphi^2\,dS=1$, we have

\begin{equation}\label{EG3}
  \displaystyle\int_{\Omega}|\overline{\nabla} u|^2\leq C_1,
\end{equation}
where
$C_1=C_1(\km):=\dfrac{32}{3\arctan(1/\km)}+\dfrac{\lambda_1(\Sigma)}{\sqrt{1+\km^2}}$.
 \end{prop}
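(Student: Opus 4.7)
The plan is to combine Lemma \ref{EG1} (Dirichlet energy minimization) with Lemma \ref{Gradv} (explicit gradient formula for the test function $v_{\rho,c}$), and then optimize the parameters $\rho,c$. Since $u$ minimizes the Dirichlet integral among $C^\infty_\varphi(\overline{\Omega})$, Lemma \ref{EG1} gives
\[
\int_\Omega |\overline{\nabla} u|^2\,dV \le \int_\Omega |\overline{\nabla} v_{\rho,c}|^2\,dV
\]
for every $\rho\in(0,1)$ and $c>1$. Note that $v_{\rho,c}$ is admissible because $\overline{\psi}_{\rho,c}(0)=1$ (so it restricts to $\varphi$ on $\Sigma$) and $\overline{\psi}_{\rho,c}\equiv 0$ for $t\ge\rho T_\Sigma<T_\Sigma\le\mathrm{minfoc}(\Sigma)$ (so it extends smoothly by zero to the rest of $\Omega$).

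Next I would estimate the two summands on the right of \eqref{GradTest} separately. For the first, I apply $\theta(t,x)\le1$ from \eqref{tetauno}, use the normalization $\int_\Sigma\varphi^2\,dS=1$, and invoke \eqref{Test}:
\[
\int_0^{\rho T_\Sigma}\!\!\int_\Sigma (\overline{\psi}_{\rho,c}'(t))^2\varphi^2\theta\,dS\,dt
\le \int_{\rho T_\Sigma/c}^{\rho T_\Sigma}(\overline{\psi}_{\rho,c}'(t))^2\,dt
\le \frac{16}{3\rho T_\Sigma}\cdot\frac{c(c^3-1)}{(c^2-1)^2}.
\]
For the second, using $\theta\le1$, $|\overline{\psi}_{\rho,c}|\le1$, and the pointwise bound of Lemma \ref{EG2} together with $\int_\Sigma|\nabla\varphi|^2\,dS=\lambda_1(\Sigma)\int_\Sigma\varphi^2\,dS=\lambda_1(\Sigma)$, I obtain
\[
\int_0^{\rho T_\Sigma}\!\!\int_\Sigma(\overline{\psi}_{\rho,c})^2|\nabla^T\widetilde\varphi|^2\theta\,dS\,dt
\le \lambda_1(\Sigma)\sin^2 T_\Sigma\int_0^{\rho T_\Sigma}\frac{dt}{\sin^2(T_\Sigma-t)} = \lambda_1(\Sigma)\sin^2 T_\Sigma\bigl[\cot((1-\rho)T_\Sigma)-\cot T_\Sigma\bigr].
\]

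Finally I would optimize by taking $\rho=1/2$ and letting $c\to\infty$. The factor $\frac{c(c^3-1)}{(c^2-1)^2}\to 1$, so the first summand tends to $\frac{32}{3T_\Sigma}=\frac{32}{3\arctan(1/k_{\max})}$. For the second summand, the half-angle identity $\cot(T/2)-\cot T=1/\sin T$ gives
\[
\lambda_1(\Sigma)\sin^2 T_\Sigma\bigl[\cot(T_\Sigma/2)-\cot T_\Sigma\bigr] = \lambda_1(\Sigma)\sin T_\Sigma = \frac{\lambda_1(\Sigma)}{\sqrt{1+k_{\max}^2}},
\]
since $\sin T_\Sigma = \sin\arctan(1/k_{\max}) = 1/\sqrt{1+k_{\max}^2}$. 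Summing these two contributions recovers exactly $C_1(k_{\max})$, which, together with the infimum over admissible $c$ in Lemma \ref{EG1}, yields \eqref{EG3}.

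The only subtle point is confirming the choice $\rho=1/2$ is the one that makes both limiting constants match the stated $C_1$ simultaneously (the half-angle identity is what makes the second term collapse neatly), and justifying that passing to $\inf_{c>1}$ is legitimate, which follows from Lemma \ref{EG1} applied to each finite $c$ and the monotone convergence of the resulting upper bound.
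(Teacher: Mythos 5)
Your proposal is correct and follows essentially the same route as the paper: minimize the Dirichlet energy against the cutoff test function $v_{\rho,c}$, split via Lemma \ref{Gradv}, bound the two terms using \eqref{tetauno}, \eqref{Test} and Lemma \ref{EG2}, then take $\rho=1/2$ and $c\to\infty$. The only cosmetic difference is that you use the half-angle identity $\cot(T/2)-\cot T=1/\sin T$ where the paper uses the subtraction formula for $\cot(A-B)$; the limiting argument in $c$ is handled the same way.
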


\begin{proof}
  Let $0<\rho<1$, $c>1$ and let $v_{\rho,c}:\overline{\Omega}\to \mathbb{R}$ be the function defined by \eqref{vro}.

From Lemma \ref{Gradv} and Lemma \ref{EG1}    we have
\begin{equation}\label{estgrad1}
\begin{array}{rcl}
 \displaystyle\int_{\Omega}|\overline{\nabla} u|^2dV&\leq& \displaystyle\int_{\Omega}|\overline{\nabla} v_{\rho, c}|^2dV\\
  & = &  \displaystyle\int_{0}^{\rho T_{\Sigma}}\int_{\Sigma}(\overline{\psi}_{\rho,c}'(t))^2\widetilde{\varphi}^2(\Phi_t(x))\theta(t,x)dS\,dt+ \displaystyle\int_{0}^{\rho T_{\Sigma}}\int_{\Sigma}(\overline{\psi}_{\rho,c}(t))^2|\nabla^{T}\widetilde{\varphi}|^2(\Phi_t(x))\theta(t,x)dS\,dt\\
       &  &  \\
     &\leq&\displaystyle \int_{\frac{\rho T_{\Sigma}}{c}}^{\rho T_{\Sigma}}(\overline{\psi}_{\rho,c}'(t))^2\,dt+\displaystyle\int_{0}^{\rho T_{\Sigma}}\int_{\Sigma}(\overline{\psi}_{\rho,c}(t))^2|\nabla^{T}\widetilde{\varphi}|^2(\Phi_t(x))\,dS\,dt,
    \end{array}
\end{equation}

where the last inequality is a consequence of  the condition $\displaystyle\int\limits_{\Sigma}\varphi^2\,d\Sigma=1$ and \eqref{tetauno}.

 Then, by (\ref{Test}) and Lemma \ref{EG2} we can rewrite \eqref{estgrad1} as
\begin{equation}\label{intgradu}
 \begin{array}{rcl}
    \displaystyle\int_{\Omega}|\overline{\nabla} u|^2dV
     & \leq & \dfrac{16}{3\rho T_{\Sigma}}\dfrac{c(c^3-1)}{(c^2-1)^2}+\sin ^2(T_{\Sigma})\displaystyle\int_{0}^{\rho T_{\Sigma}}\csc^2(T_{\Sigma}-t)dt\displaystyle\int_{\Sigma}|\nabla\varphi|^2dS.
\end{array}
\end{equation}

 On the other hand, note that  $\lambda_1(\Sigma)=\displaystyle\int_{\Sigma}|\nabla\varphi|^2dS$.  Then it follows that
$$ \begin{array}{rcl}
  \sin ^2(T_{\Sigma})\displaystyle\int_{0}^{\rho T_{\Sigma}}\csc^2(T_{\Sigma}-t)dt\displaystyle\int_{\Sigma}|\nabla\varphi|^2dS
 &\leq&\lambda_1(\Sigma)\,\sin ^2(T_{\Sigma})\left(\cot((1-\rho)T_{\Sigma})-\cot(T_{\Sigma})\right)\\
 &&\\
 &=&\lambda_1(\Sigma)\,\dfrac{\sin \,(T_{\Sigma})\sin \,(\rho T_{\Sigma})}{\sin \,((1-\rho) T_{\Sigma})},
  \end{array}
$$
  where in the last equality we have used that $\cot(A-B)=\dfrac{\cot A\cot B+1}{\cot B-\cot A}$.

  Then \eqref{intgradu}  stands as

$$\displaystyle\int_{\Omega}|\overline{\nabla} u|^2dV\leq\dfrac{16}{3\rho T_{\Sigma}}\dfrac{c(c^3-1)}{(c^2-1)^2}+\lambda_1(\Sigma)\,\dfrac{\sin \,(T_{\Sigma})\sin \,(\rho T_{\Sigma})}{\sin \,((1-\rho) T_{\Sigma})}.$$

Making $c\to +\infty$, choosing $\rho=1/2$ and from the definition of $T_{\Sigma}$ in  \eqref{TSigma}, it follows that
  \begin{equation}\label{EG03}
 \begin{array}{rcl}
  \displaystyle\int_{\Omega}|\overline{\nabla} u|^2dV & \leq & \dfrac{32}{3T_{\Sigma}}+\lambda_1(\Sigma)\,\sin \,(T_{\Sigma})=\dfrac{32}{3\arctan(1/k_{\textrm{max}})}+\lambda_1(\Sigma)\,\sin \,(\arctan(1/k_{\textrm{max}})) \\
   &  &  \\
   & = & \displaystyle{\dfrac{32}{3\arctan(1/k_{\textrm{max}})}+\frac{\lambda_1(\Sigma)}{\sqrt{1+k_{\textrm{max}}^2}}}.
\end{array}
  \end{equation}

%
%
%
\end{proof}


\begin{lema}\label{Der}
  For all $0\leq t<T_{\Sigma}$ and $f\in C^1(\overline{\Omega})$ we have
$$\dfrac{d}{dt}\left(\int_{\Sigma_t}f(y)dS_t\right)=\int_{\Sigma_t}\langle \nabla f(y),\nabla d(y)\rangle dS_t-\int_{\Sigma_t}f(y)H_{\Sigma_t}dS_t,$$
where $d$ is the signed distance to $\Sigma$ in $\mathbb{S}^{m+1}$,  i.e.

$$
  d(y)=\left\{
                                                                          \begin{array}{ll}
                                                                            \textrm{dist }\,(y,\Sigma) &\textrm{ if}\, x\in \overline{\Omega} \\
                                                                           -\textrm{dist}\,(y,\Sigma) &\textrm{ if}\, x\in \overline{\Omega}^{c},
                                                                          \end{array}
                                                                        \right.
$$
  and $H_{\Sigma_t}$ the mean curvature of the hypersurface $\Sigma_t$.
\end{lema}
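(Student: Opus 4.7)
The plan is to reduce the statement to a differentiation under the integral sign by pulling back everything from $\Sigma_t$ to $\Sigma$ via the diffeomorphism $\Phi_t$, and then to push the result back to $\Sigma_t$. The key identity needed is a logarithmic derivative formula for the Jacobian $\theta(t,x)$ of the normal exponential map that produces the mean curvature of the parallel hypersurface.

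First I would apply the change of variables formula \eqref{INT} (valid since $t<T_\Sigma\le \mathrm{minfoc}(\Sigma)$, so $\Phi_t:\Sigma\to\Sigma_t$ is a diffeomorphism) to write
\begin{equation*}
\int_{\Sigma_t} f(y)\,dS_t \;=\; \int_\Sigma f(\Phi_t(x))\,\theta(t,x)\,dS.
\end{equation*}
Since $\theta$ and $\Phi$ are smooth in $t$ on the open set where $\Phi_t$ is a diffeomorphism, I can differentiate under the integral sign to obtain
\begin{equation*}
\frac{d}{dt}\int_{\Sigma_t} f(y)\,dS_t \;=\; \int_\Sigma \Bigl[\partial_t\bigl(f(\Phi_t(x))\bigr)\,\theta(t,x)\,+\,f(\Phi_t(x))\,\partial_t\theta(t,x)\Bigr]\,dS.
\end{equation*}

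Next I would identify each term geometrically. For the first term, the curve $t\mapsto \Phi_t(x)=\gamma_x(t)$ is the unit-speed normal geodesic through $x$, and for $0\le t<T_\Sigma$ the function $d(\Phi_t(x))=t$, so $\nabla d(\Phi_t(x))=\gamma_x'(t)$. Hence
\begin{equation*}
\partial_t\bigl(f(\Phi_t(x))\bigr)\;=\;\langle \nabla f(\Phi_t(x)),\gamma_x'(t)\rangle\;=\;\langle \nabla f,\nabla d\rangle(\Phi_t(x)).
\end{equation*}
For the second term, the heart of the proof is the identity
\begin{equation*}
\frac{\partial_t\theta(t,x)}{\theta(t,x)} \;=\; -H_{\Sigma_t}(\Phi_t(x)).
\end{equation*}
This follows because the principal curvatures of $\Sigma_t$ evolve in the sphere according to $k_i(\Phi_t(x))=\cot(\theta_i(x)-t)$ (with $\cot\theta_i(x)=k_i(x)$), so that by the formula
\begin{equation*}
\theta(t,x)=\prod_{i=1}^m\bigl(\cos t-k_i(x)\sin t\bigr)=\prod_{i=1}^m\frac{\sin(\theta_i(x)-t)}{\sin\theta_i(x)}
\end{equation*}
used in the proof of Lemma \ref{vol} and Lemma \ref{CP}, one computes
\begin{equation*}
\partial_t\ln\theta(t,x)=-\sum_{i=1}^m\cot(\theta_i(x)-t)=-\sum_{i=1}^m k_i(\Phi_t(x))=-H_{\Sigma_t}(\Phi_t(x)).
\end{equation*}

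Finally, substituting these two identifications back and changing variables $y=\Phi_t(x)$ (using \eqref{INT} again, so that the factor $\theta(t,x)\,dS$ becomes $dS_t$) yields
\begin{equation*}
\frac{d}{dt}\int_{\Sigma_t} f\,dS_t=\int_{\Sigma_t}\langle\nabla f,\nabla d\rangle\,dS_t-\int_{\Sigma_t}f\,H_{\Sigma_t}\,dS_t,
\end{equation*}
which is the desired formula. The only nontrivial step is the derivation of $\partial_t\ln\theta=-H_{\Sigma_t}$; everything else is bookkeeping with the change of variables. This Jacobian identity is the main obstacle, but it is a direct consequence of the explicit product expression for $\theta$ already implicitly used earlier in the paper, so no genuinely new computation is required.
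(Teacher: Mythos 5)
Your proof is correct and follows essentially the same route as the paper: change variables to $\Sigma$ via $\Phi_t$, differentiate under the integral sign, identify $\partial_t(f\circ\Phi_t)=\langle\overline{\nabla}f,\overline{\nabla}d\rangle\circ\Phi_t$, and use $\partial_t\theta/\theta=-H_{\Sigma_t}\circ\Phi_t$ before changing variables back. The only difference is that you derive the Jacobian identity explicitly from the product formula $\theta(t,x)=\prod_i(\cos t-k_i(x)\sin t)$ (which indeed follows from \eqref{dFi} and Lemma \ref{CP}), whereas the paper simply cites Lemma 10.9 of \cite{gray2003tubes}.
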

\begin{proof}
Making the change of variable $y=\Phi_t(x)$,  
  $$\begin{array}{rcl}
   \dfrac{d}{dt}\left(\displaystyle\int_{\Sigma_t}f(y)dS_t\right)  & = & \dfrac{d}{dt}\left(\displaystyle\int_{\Sigma}f(\Phi_t(x))\theta(t,x)dS\right) \\
     &  &  \\
     & = & \displaystyle\int_{\Sigma}\langle \overline{\nabla} f(\Phi_t(x)),\overline{\nabla} d(\Phi_t(x))\rangle\theta(t,x)dS+\displaystyle\int_{\Sigma}f(\Phi_t(x))\theta'(t,x)dS\\
&&\\
 & = &  \displaystyle\int_{\Sigma }\langle \overline{\nabla} f(\Phi_t(x)),\overline{\nabla} d(\Phi_t(x))\rangle dS-\displaystyle\int_{\Sigma}f(\Phi_t(x))H_{\Sigma_t}(\Phi_t(x))\theta(t,x)dS\\
&&\\
&=& \displaystyle\int_{\Sigma_t}\langle \overline{\nabla} f(y),\overline{\nabla} d(y)\rangle dS_t-\displaystyle\int_{\Sigma_t}f(y)H_{\Sigma_t}(y)dS_t.
\end{array}$$
Here we have used that  $H_{\Sigma_t}(\Phi_t(x))=-\frac{\theta'(t,x)}{\theta(t,x)}$ (see Lemma 10.9 in \cite{gray2003tubes}), where  $\theta'(t,x)$ denotes the derivative of $\theta(t,x)$  with respect to the first variable.

\end{proof}

The following result is a consequence of Lemma 3.5 in \cite{duncan2023improved}.

  \begin{lema}\label{Spruck}
   Let $0<\varepsilon\leq\frac{\Lambda}{2}$. Then for $t\in [0,\arctan(\frac{\varepsilon}{\Lambda^2})]$,
$$H_{\Sigma_t}\leq 2\Lambda.$$
   \end{lema}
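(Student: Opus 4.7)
The plan is to reduce to an explicit algebraic bound on $H_{\Sigma_t}$ in terms of the principal curvatures of $\Sigma$. Using the standard Jacobi field computation for the normal exponential map in $\mathbb{S}^{m+1}$ (which is already implicit in the formula $d(\Phi_t)_x e_i = (\cos t - k_i\sin t)E_i(t)$ derived in the proof of Lemma \ref{EG2}), the shape operator of $\Sigma_t$ at $\Phi_t(x)$ is diagonal in the frame $\{E_i(t)\}$ with eigenvalues
\[
k_i(t,x)=\frac{k_i(x)\cos t+\sin t}{\cos t-k_i(x)\sin t}.
\]
Summing and using the minimality $\sum_i k_i(x)=0$, I would rewrite
\[
k_i(t,x)=k_i(x)+\frac{(1+k_i(x)^2)\sin t}{\cos t-k_i(x)\sin t},
\]
so that the $k_i(x)$ terms telescope to zero and
\[
H_{\Sigma_t}(\Phi_t(x))=\sin t\sum_{i=1}^m\frac{1+k_i(x)^2}{\cos t-k_i(x)\sin t}.
\]
This is the key structural reduction: all the terms are now non-negative once the denominator is controlled.

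Next I would bound the two factors separately. For the numerator, the obvious estimate $\sum_i(1+k_i^2)=m+|A|^2\le m+\Lambda^2$ suffices. For the denominator, since $|k_i|\le \Lambda$ pointwise, one has $\cos t-k_i\sin t\ge \cos t-\Lambda\sin t=\cos t\,(1-\Lambda\tan t)$; and since $t\le\arctan(\varepsilon/\Lambda^2)$ with $\varepsilon\le\Lambda/2$, we get $\Lambda\tan t\le \varepsilon/\Lambda\le 1/2$, hence $\cos t-k_i\sin t\ge \tfrac{1}{2}\cos t$. Combining,
\[
H_{\Sigma_t}\le \sin t\cdot\frac{m+\Lambda^2}{\tfrac12\cos t}=2\tan t\,(m+\Lambda^2)\le \frac{2\varepsilon(m+\Lambda^2)}{\Lambda^2}.
\]

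Finally I would conclude by noting $m+\Lambda^2\le 2\Lambda^2$ in the only regime of interest $\Lambda\ge\sqrt m$ (outside of it, $\Sigma$ is a great sphere or a Clifford torus by Simons and the lemma is either trivial or vacuous for the application in Theorem \ref{TP}), which gives $H_{\Sigma_t}\le 4\varepsilon\le 2\Lambda$. The main obstacle is keeping the denominator uniformly away from the focal singularity; this is exactly what the restriction $\tan t\le \varepsilon/\Lambda^2$ buys us, and it is also the reason the quadratic power $\Lambda^2$ (rather than $\Lambda$) appears in the argument of the arctangent. The minimality assumption is indispensable in step two, since without it the sum would include the (unsigned) $\sum k_i$ term and no uniform bound in terms of $\Lambda$ alone would be available.
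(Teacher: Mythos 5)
Your proof is correct, but it follows a genuinely different route from the paper's. The paper's proof is a three-line reduction: it quotes Lemma 3.5 of \cite{duncan2023improved} for the bound $H_{\Sigma_t}\leq \frac{\Lambda\varepsilon}{\Lambda-\varepsilon}\left(\frac{m}{\Lambda^2}+1\right)$ and then uses $\varepsilon\le\Lambda/2$ and $m\le\Lambda^2$ to conclude, so it is short but not self-contained. You instead rederive the estimate from scratch: the principal curvatures of the parallel hypersurface are $k_i(t)=\cot(\theta_i-t)=\frac{k_i\cos t+\sin t}{\cos t-k_i\sin t}$ (consistent with Lemma \ref{CP} and with the identity $H_{\Sigma_t}=-\theta'/\theta$ used later in the paper), the decomposition $k_i(t)=k_i+\frac{(1+k_i^2)\sin t}{\cos t-k_i\sin t}$ makes the minimality cancellation explicit, and the denominator bound $\cos t-k_i\sin t\ge\frac12\cos t$ on $[0,\arctan(\varepsilon/\Lambda^2)]$ is exactly the focal-point control that Lemma \ref{CP} provides; your final chain $H_{\Sigma_t}\le 2\tan t\,(m+\Lambda^2)\le\frac{2\varepsilon(m+\Lambda^2)}{\Lambda^2}\le 4\varepsilon\le 2\Lambda$ is arithmetic-equivalent to the paper's. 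What your version buys is independence from the external reference and a transparent view of where minimality enters; what the paper's version buys is brevity. One small point to make explicit if you write this up: both arguments use $m\le\Lambda^2$, which is not in the hypotheses of the lemma as stated but is harmless because Simons' theorem forces $\Lambda\in\{0\}\cup[\sqrt m,\infty)$ and the case $\Lambda=0$ makes the hypothesis $0<\varepsilon\le\Lambda/2$ vacuous, as you correctly observe.
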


 \begin{proof}
Let $0<\varepsilon\leq\frac{\Lambda}{2}$ and $t\in [0,\arctan(\frac{\varepsilon}{\Lambda^2})]$. From Lemma 3.5 in \cite{duncan2023improved} it follows that
$$H_{\Sigma_t}\leq \dfrac{\Lambda \varepsilon}{\Lambda-\varepsilon}\left(\dfrac{m}{\Lambda^2}+1\right).$$
On the other hand, since $\varepsilon\leq \Lambda/2$ and $m\leq \Lambda^2$ we have that
$$\frac{\Lambda\varepsilon}{\Lambda-\varepsilon}\left(\frac{m}{\Lambda^2}+1\right)\leq\dfrac{2\Lambda\varepsilon}{\Lambda-\varepsilon}\leq\frac{\Lambda^2}{\Lambda-\varepsilon}\leq\frac{2\Lambda^2}{\Lambda}=2\Lambda.$$
We conclude that $H_{\Sigma_t}\leq 2\Lambda$.
%
%
\end{proof}

\begin{teo}\label{DIP}Let $\Sigma$ be a closed embedded minimal hypersurface in the unit sphere $\mathbb{S}^{m+1}$ and let $\Lambda=\max\limits_{\Sigma}|A|$ be the norm of  its second fundamental form.   Assume that  $\Lambda>\sqrt{m}$ and $\varphi$ is a  first  eigenfunction of the laplacian satisfying $\displaystyle\int\limits_{\Sigma}\varphi^2\,dS=1$. Then the harmonic extension $u$ of $\varphi$ satisfies
$$ \displaystyle\int_{\Omega}u^2 dV>\dfrac{1}{4(12\Lambda+m+11)^2+1}\int_{\Omega}|\overline{\nabla} u|^2dV.$$
 \end{teo}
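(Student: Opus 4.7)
The plan is to derive the inverse Poincaré estimate from a differential-inequality argument for the $L^2$ mass of $u$ on the parallel hypersurfaces $\Sigma_t=\Phi_t(\Sigma)$. Set
\[
F(t):=\int_{\Sigma_t}u^2\,dS_t, \qquad G(t):=\int_{\Sigma_t}\langle \overline{\nabla} u,\overline{\nabla} d\rangle^2\,dS_t,
\]
so that $F(0)=\int_\Sigma\varphi^2\,dS=1$. Lemma \ref{Der} applied to $f=u^2$, combined with Cauchy--Schwarz on the boundary integral, yields
\[
|F'(t)|\le 2\sqrt{F(t)G(t)} + \max_{\Sigma_t}|H_{\Sigma_t}|\,F(t).
\]
Lemma \ref{Spruck} with $\varepsilon=\Lambda/2$ gives $H_{\Sigma_t}\le 2\Lambda$ on the interval $[0,T_0]$ with $T_0:=\arctan(1/(2\Lambda))$; on the other hand, the explicit formula
\[
H_{\Sigma_t}=\tan t\sum_{i=1}^m\frac{1+k_i^2}{1-k_i\tan t},
\]
valid for parallel surfaces in the sphere together with the minimality of $\Sigma$ (so $\sum_i k_i=0$), shows that $H_{\Sigma_t}\ge 0$ in this range. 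Hence $|F'|\le 2\sqrt{FG}+2\Lambda F$ on $[0,T_0]$.

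Integrating this bound from $0$ to $\tau\in[0,T_0]$ gives
\[
1\;\le\; F(\tau)+2\int_0^\tau\sqrt{F(s)G(s)}\,ds+2\Lambda\int_0^\tau F(s)\,ds.
\]
Apply the Young inequality $2\sqrt{FG}\le \alpha F+G/\alpha$ with the specific choice $\alpha=2B$, where $B:=\int_\Omega|\overline{\nabla} u|^2\,dV$, integrate the resulting inequality over $\tau\in[0,T_0]$, and use Fubini together with the obvious bounds $\int_0^{T_0}F\,ds\le A:=\int_\Omega u^2\,dV$ and $\int_0^{T_0}G\,ds\le B$. This produces
\[
T_0\;\le\; A\bigl(1+2(B+\Lambda)T_0\bigr)+\tfrac{T_0}{2},
\]
which rearranges to $A\ge T_0/\bigl(2+4(B+\Lambda)T_0\bigr)$. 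Since $1/T_0\le\sqrt{4\Lambda^2+1}\le 2\Lambda+1$, a short manipulation yields the clean bound
\[
Q\;:=\;\frac{B}{A}\;\le\; 2B(2B+4\Lambda+1).
\]

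The final step is to feed in the gradient bound from Proposition \ref{EG}. Using $\arctan(x)\ge x/\sqrt{1+x^2}$ together with $k_{\max}\le\Lambda$ gives $B\le C_1\le \tfrac{32}{3}(1+\Lambda)+m=:B_0$. Since $B\mapsto 2B(2B+4\Lambda+1)$ is increasing on $B\ge 0$, we get $Q\le 2B_0(2B_0+4\Lambda+1)$, and a direct polynomial expansion confirms the strict inequality
\[
2B_0(2B_0+4\Lambda+1) \;<\; 4(12\Lambda+m+11)^2+1
\]
for every $m\ge 0$ and $\Lambda\ge\sqrt m$, which is the theorem. The main obstacle is that the numerical margin in this last comparison is tight: both the sharpened upper bound $C_1\le\tfrac{32}{3}(1+\Lambda)+m$ (rather than, say, the cruder $11\Lambda+m+11$) and the particular Young parameter $\alpha=2B$ are needed in order for the resulting bound on $Q$ to fit strictly under the target $4(12\Lambda+m+11)^2+1$; a less careful choice at either step loses the required constants.
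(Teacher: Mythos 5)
Your argument is correct, and while it rests on the same three pillars as the paper's proof --- Lemma \ref{Der}, the mean-curvature bound of Lemma \ref{Spruck}, and the energy bound $B\le C_1$ of Proposition \ref{EG}, all deployed on a collar about $\Sigma$ of width comparable to $\arctan(1/\Lambda)$ --- the differential-inequality machinery is genuinely different. The paper tracks the tail mass $\eta(t)=\int_{\Omega(t)}u^2\,dV$, converts the term $\int_{\Sigma_t}\langle\overline{\nabla}(u^2),\overline{\nabla}d\rangle\,dS_t$ into $\int_{\Omega(t)}\overline{\triangle}(u^2)\,dV=2\int_{\Omega(t)}|\overline{\nabla}u|^2\,dV$ by Stokes, and integrates the second-order inequality $\eta''+2\Lambda\eta'\le 2C_1$ with the integrating factor $e^{2\Lambda t}$, optimizing over $\varepsilon$ at the end; you instead keep the slice mass $F(t)$, estimate the cross term by Cauchy--Schwarz on each slice, and absorb it with Young's inequality at the tuned parameter $\alpha=2B$, after which Fubini reduces everything to the two global quantities $A$ and $B$. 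This avoids both the Stokes step and the explicit ODE solution, and dispenses with the auxiliary parameter $\varepsilon$ (you simply fix $\varepsilon=\Lambda/2$ in Lemma \ref{Spruck}). I verified the individual steps: the positivity $H_{\Sigma_t}\ge 0$ on $[0,T_0]$ also follows directly from Lemma \ref{vol} together with the identity $H_{\Sigma_t}=-\theta'/\theta$ used in the proof of Lemma \ref{Der}; the bounds $\int_0^{T_0}F\le A$ and $\int_0^{T_0}G\le B$ are legitimate coarea estimates because $T_0=\arctan(1/(2\Lambda))<T_\Sigma$; and the closing comparison does hold, since with $B_0=\tfrac{32}{3}(1+\Lambda)+m$ and $D=12\Lambda+m+11$ one computes $4D^2+1-2B_0(2B_0+4\Lambda+1)=\bigl(\tfrac{8}{3}\Lambda+\tfrac{2}{3}\bigr)\bigl(\tfrac{40}{3}\Lambda+m+\tfrac{34}{3}\bigr)+1>0$. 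Your caveat about tightness is warranted in one respect: substituting the paper's cruder bound $C_1<11\Lambda+m+11$ into your quadratic $2B_0(2B_0+4\Lambda+1)$ would leave a deficit of $4\Lambda^2-22\Lambda-2m-21$, which is negative for small $\Lambda$, so the sharper estimate $C_1\le\tfrac{32}{3}(1+\Lambda)+m$ (via $\arctan x\ge x/\sqrt{1+x^2}$) really is needed in your scheme. Both routes yield bounds on $Q$ of the same quadratic order in $\Lambda$ and $m$; yours is slightly more self-contained, while the paper's version makes the role of harmonicity (through $\overline{\triangle}(u^2)=2|\overline{\nabla}u|^2$) more visible.
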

\begin{proof}
For $t\geq 0$, let $\Omega(t):=\{y\in \Omega:d(y)>t\}$ and $\eta(t):=\displaystyle\int_{\Omega(t)}u^2$. It follows that  $\eta(0)=\displaystyle\int_{\Omega}u^2dV$. Moreover, from the   coarea   formula

$$\begin{array}{rcl}
    \eta'_{+}(t) & = & \lim\limits_{\varepsilon\to 0^+}\dfrac{\eta(t+\varepsilon)-\eta(t)}{\varepsilon} \\
     &  &  \\
     & = &  \lim\limits_{\varepsilon\to 0^+}\dfrac{1}{\varepsilon}\left(\displaystyle\int_{\Omega_{t+\varepsilon}}u^2dV-\displaystyle\int_{\Omega_{t}}u^2dV\right) \\
     &  &  \\
  & = &-\lim\limits_{\varepsilon\to 0^+}\dfrac{1}{\varepsilon}\displaystyle\int_{\{t\leq d(y)\leq t+\varepsilon\}}u^2dV\\
&&\\
&=&-\lim\limits_{\varepsilon\to 0^+}\dfrac{1}{\varepsilon}\displaystyle\int_{t}^{t+\varepsilon}\int_{\Sigma_s}u^2dS_sds\\
&&\\
&=&-\displaystyle\int_{\Sigma_t}u^2dS_t,
  \end{array}$$

(analogously $\eta'_{-}(t)=-\displaystyle\int_{\Sigma_t}u^2dS_t$)  and so $\eta'(0)=-\displaystyle\int_{\Sigma}\varphi^2=-1$.

From Lemma \ref{Der} we have

$$\begin{array}{rcl}
   \eta''(t)  & = & -\dfrac{d}{dt}\left(\displaystyle\int_{\Sigma_t}u^2dS_t\right) \\
     &  &  \\
     & = & -\displaystyle\int_{\Sigma_t}\langle \overline{\nabla} (u^2),\overline{\nabla} d\rangle dS_t+\displaystyle\int_{\Sigma_t}u^2H_{\Sigma_t}dS_t\\
& \leq &  -\displaystyle\int_{\Sigma_t}\langle \overline{\nabla} (u^2),\overline{\nabla} d\rangle dS_t+2\Lambda\displaystyle\int_{\Sigma_t}u^2dS_t\\
&&\\
&=& \displaystyle\int_{\Omega(t)}\overline{\triangle}(u^2)dV-2\Lambda\eta'(t),
\end{array}$$

where in the last two lines we have used Lemma \ref{Spruck} and Stokes' formula   respectively. Therefore, by Proposition \ref{EG}

  \begin{equation}\label{EDO1}
    \eta''(t)+2\Lambda\eta'(t)\leq 2\displaystyle\int_{\Omega(t)}|\overline{\nabla} u|^2\leq 2\displaystyle\int_{\Omega}|\overline{\nabla} u|^2\leq 2C_1.
  \end{equation}

 Multiplying by $e^{2\Lambda t}$ both sides of (\ref{EDO1}) and integrating from $0$ to $t$ we have
  \begin{equation}\label{EDO2}
    \eta'(t)\leq 2C_1\left(\dfrac{1-e^{-2\Lambda t}}{2\Lambda}\right)-e^{-2\Lambda t}.
  \end{equation}
  Now we can integrate from $0$ to $ T_{\varepsilon}:=\arctan(\frac{\varepsilon}{\Lambda^2}) $ in (\ref{EDO2}) and deduce that
  \begin{equation}\label{EDO3}
    \eta(T_{\varepsilon})-\eta(0)\leq\frac{1}{\Lambda}C_1\left(T_{\varepsilon}+\dfrac{e^{-2\Lambda T_{\varepsilon}}-1}{2\Lambda}\right)+\dfrac{e^{-2\Lambda T_{\varepsilon}}-1}{2\Lambda}.
  \end{equation}
  Considering that $\eta(T_{\varepsilon})>0$ in (\ref{EDO3}) it follows that
\begin{equation}\label{eta0}
   \begin{array}{rcl}
 \eta(0) &>&  \dfrac{1-e^{-2\Lambda T_{\varepsilon}}}{2\Lambda}-\dfrac{1}{\Lambda}C_1\left(T_{\varepsilon}+\dfrac{e^{-2\Lambda T_{\varepsilon}}-1}{2\Lambda}\right)\\
   &  &  \\
   & = &  \dfrac{1-e^{-\Lambda T_{\varepsilon}}}{2\Lambda T_{\varepsilon}}T_{\varepsilon}-\dfrac{C_1}{2}\left(\dfrac{2\Lambda T_{\varepsilon}+e^{-2\Lambda T_{\varepsilon}}-1}{\Lambda^2T_{\varepsilon}^2}\right)T_{\varepsilon}^2
\end{array}
\end{equation}

On the other hand, for all $x\geq 0$
    \begin{equation}\label{D1}
      \dfrac{x+e^{-x}-1}{x^2}\leq\frac{1}{2}.
    \end{equation}
Using the inequality (\ref{D1}) and the fact that $T_{\varepsilon}<\frac{\varepsilon}{\Lambda^2}$  in (\ref{eta0})  we have

\begin{equation}\label{EDO4}
\begin{array}{rcl}
  \displaystyle\int_{\Omega}u^2dV & = & \eta(0) \\
   &  &  \\
   & > & T_{\varepsilon}(1-\Lambda T_{\varepsilon})-C_1 T_{\varepsilon}^2 \\
   &  &  \\
   & = &  T_{\varepsilon}\left(1-(\Lambda+C_1 )T_{\varepsilon}\right)\\
&&\\
&> &T_{\varepsilon}\left(1-\frac{\varepsilon}{\Lambda^2}(\Lambda+C_1 )\right)\\
&&\\
&=&T_{\varepsilon}\left(1-\varepsilon\left(\frac{1}{\Lambda}+\frac{C_1 }{\Lambda^2}\right)\right).
\end{array}
\end{equation}

From (\ref{EDO4}), it follows that for any $0<\varepsilon\leq \dfrac{1}{2}\left(\dfrac{1}{\Lambda}+\dfrac{C_1 }{\Lambda^2}\right)^{-1}$,

%

%

  $$\displaystyle\int_{\Omega}u^2dV > \dfrac{T_{\varepsilon}}{2}=\dfrac{1}{2}\arctan(\dfrac{\varepsilon}{\Lambda^2}).$$
In particular, for $\varepsilon=\dfrac{1}{2}\left(\dfrac{1}{\Lambda}+\dfrac{C_1 }{\Lambda^2}\right)^{-1}$ we have

$$\displaystyle\int_{\Omega}u^2dV>\dfrac{1}{2}\arctan\left(\dfrac{1}{2(\Lambda+C_1 )}\right).$$

Finally, this last inequality joint with Proposition \ref{EG} lead us to

 \begin{equation}\label{Inv} \dfrac{\displaystyle\int_{\Omega}u^2dV}{\displaystyle\int_{\Omega}|\overline{\nabla} u|^2dV}>
\dfrac{\arctan\left(\dfrac{1}{2(\Lambda+C_1 )}\right)}{2C_1 }.
\end{equation}
 
On the other hand, note that  $1\leq\km^2\le \frac{m-1}{m}\Lambda^2$.  Then, using the fact that $\lambda_1(\Sigma)\le m$ and since $\arctan x\ge \dfrac{x}{1+x^2}$ for $x\in [0,1]$,   we have
\begin{equation}\label{descor}
\begin{split}
C_1&=\dfrac{32}{3\arctan(1/\km)}+\dfrac{\lambda_1(\Sigma)}{\sqrt{1+\km^2}}\\ 
&\leq\dfrac{32(1+\km^2)}{3\km}+\dfrac{\lambda_1(\Sigma)}{\sqrt{1+\km^2}}\\ 
&<\dfrac{32\km}{3}+\dfrac{m+11}{\km}\\ 
&<\dfrac{32\Lambda}{3}+\dfrac{m+11}{\km}\\
&< 11\Lambda + m+11 .
\end{split}
\end{equation}

 We now define
 \begin{equation}\label{C2}\begin{split}
  C_2=C_2(m,\Lambda):&=\dfrac{\arctan\left(\dfrac{1}{2(\Lambda+C_1 )}\right)}{2C_1}.
  \end{split}
\end{equation} From (\ref{descor})  and (\ref{C2})  we have
\begin{equation}\label{descor1}
\begin{split}
  C_2&=\dfrac{\arctan\left(\dfrac{1}{2\Lambda+2C_1}\right)}{2C_1}\\
  &\ge\dfrac{2\Lambda+2C_1}{2C_1[(2\Lambda+2C_1)^2+1]}\\
  &\ge\dfrac{1}{(2\Lambda+2C_1)^2+1}\\   
  &>\dfrac{1}{4(12\Lambda+m+11)^2+1}.
  \end{split}
\end{equation}
Combining \eqref{Inv} and \eqref{descor1}  we have
\begin{equation*} \dfrac{\displaystyle\int_{\Omega}u^2dV}{\displaystyle\int_{\Omega}|\overline{\nabla} u|^2dV}>
\dfrac{1}{4(12\Lambda+m+11)^2+1}.
\end{equation*}
This completes the proof the theorem.

\end{proof}


We conclude the paper by 
 comparing our estimate and the lower bound for $\lambda_1(\Sigma)$ obtained by  Duncan, Sire and Spruck in \cite{duncan2023improved}. In their work, it is established that given a  closed and embedded minimal hypersurface $\Sigma$ in $\mathbb{S}^{m+1}$ with $\Lambda=\max\limits_{\Sigma}|A|\geq \sqrt{m}$, then  
\begin{equation}\label{ES}
  \lambda_1(\Sigma)\geq \dfrac{m}{2}+\dfrac{a(m)}{\Lambda^6+b(m)},
\end{equation}
 where
\begin{equation}\label{ayb}
  \begin{array}{rcl}
  a(m) & := & \displaystyle{\frac{3\sqrt{m}(m-1)}{3200} }  \left(m \arctan \left(\frac{1}{3 \sqrt{m}}\right)\right)^3\,\quad\textrm{and} \\
   &  &  \\
 b(m)  & := &\displaystyle{\frac{5(m-1)}{ 8 \sqrt{m}} } \left(m \arctan \left(\frac{1}{3 \sqrt{m}}\right)\right)^3.
\end{array}
\end{equation}

Since $\frac{x}2\le \arctan x\le x$ when $x\in [0,1]$, we have $\frac{\sqrt{m}}{6}\leq m \arctan (\frac{1}{3 \sqrt{m}})\le \frac{\sqrt{m}}3$, then from \eqref{ayb} we deduce 

\begin{equation}\label{ayb1}
  \begin{array}{rcl}
  a(m) & \leq & \frac{(m-1)m^2}{28800} \,\quad\textrm{and }\\
   &  &  \\
 b(m)  & \geq &\frac{5 (m-1)m }{1728}.
\end{array}
\end{equation}
Then, since $m\geq2$ and $\Lambda \geq \sqrt{m}$ we trivially obtain from \eqref{ayb1} that

\[\dfrac{a(m)}{\Lambda^6+b(m)}< \dfrac{ (m-1)m^2  }{28800\Lambda^6+\frac{28800}{175}}<\dfrac{ (m+1)m\Lambda^2  }{28800\Lambda^6+164}< \frac{(m+1)m}{32(12\Lambda+\Lambda^2+11)^2+8}<\frac{(m+1)m}{32(12\Lambda+m+11)^2+8}.
\] 

\newpage


\begin{thebibliography}{99}

\bibitem{barros2004estimates}
Abd\^{e}nago Barros and G. Pacelli Bessa, Estimates of the first eigenvalue of minimal hypersurfaces of $\mathbb{S}^{n+1}$, Matem\'atica Contempor\^anea,  17 (1999), 71–76.
 




\bibitem{CS85}Hyeong In Choi and Richard Schoen, The space of minimal embeddings of a surface into a threedimensional manifold of positive Ricci curvature, Invent. Math., 81 (1985), pp. 387–394.


\bibitem{choi1983first}
Hyeong In Choi and Ai Nung Wang, A first eigenvalue estimate for minimal hypersurfaces, Journal of Differential Geometry, 18 (1983), no. 3, 559–562.

\bibitem{soret}
  Jaigyoung Choe and Marc Soret, First eigenvalue of symmetric minimal surfaces in $\mathbb{S}^3$,  Indiana University
Mathematics Journal, 58 (2009), 269–281.

\bibitem{duncan2023improved}
Jonah A.J. Duncan, Yannick Sire, and Joel Spruck, An improved eigenvalue estimate for embedded minimal hypersurfaces in the sphere, arXiv:2308.12235 (2023).


\bibitem{gray2003tubes}
Alfred Gray, Tubes, vol. 221, Springer Science \& Business Media, 2003.




\bibitem{reilly1977applications}
Robert C. Reilly, Applications of the hessian operator in a riemannian manifold, Indiana University Mathematics Journal, 26 (1977), no. 3, 459–472.

\bibitem{simons1968minimal}
James Simons, Minimal varieties in riemannian manifolds, Annals of Mathematics, 88 (1968), 62–105.


\bibitem{tang2014isoparametric}
Zizhou Tang, Yuquan Xie, and Wenjiao Yan, Isoparametric foliation and yau conjecture on the first eigenvalue, II, Journal of Functional Analysis, 266 (2014), no. 10, 6174–6199.




\bibitem{YY80} Paul C. Yang and Shing-Tung Yau, Eigenvalues of the Laplacian of compact Riemann surfaces and
minimal submanifolds, Ann. Scuola Norm. Sup. Pisa Cl. Sci. (4), 7 (1980), pp. 55–63.
\bibitem{Y82}  Shing-Tung Yau,   Seminar on Differential Geometry, Annals of Mathematics Studies, No. 102,
Princeton University Press, Princeton, N.J.; University of Tokyo Press, Tokyo, 1982.  
\bibitem{zhao2023first}
Yuhang Zhao, First eigenvalue of embedded minimal surfaces in $\mathbb{S}^3$, arXiv:2304.06524 (2023).


\end{thebibliography}
%
%
\end{document}